\title[Hodge Theory and Deformations of Affine Cones]{Hodge Theory and Deformations of Affine Cones of Subcanonical Projective Varieties}
\author{Carmelo Di Natale}
\address{King's College London\\
 Floor 5, Waterloo Bridge Wing, Franklin Wilkins Building,\\
   150 Stamford Street, London SE1 9NH, United Kingdom}
\email[C.~Di Natale]{carmelo.di\_natale@kcl.ac.uk}
\author{Enrico Fatighenti}
\address{Mathematics Institute\\
  Zeeman Building, University of Warwick\\
  Coventry CV4 7AL, United Kingdom}
\email[E.~Fatighenti]{E.Fatighenti@warwick.ac.uk}
\author{Domenico Fiorenza}
\address{Dipartimento di Matematica "Guido Castelnuovo" \\
  Universit\`a degli Studi di Roma "La Sapienza"\\
 Piazzale Aldo Moro, 2, 00185 Roma, Italy}
\email[D.~Fiorenza]{fiorenza@mat.uniroma1.it}
\newcommand{\C}{\mathbb{C}}
\newcommand{\Z}{\mathbb{Z}}
\newcommand{\PP}{\mathbb{P}}
\newcommand{\of}{\mathcal{O}}
\newcommand{\de}{\partial}
\newcommand{\W}{\bigwedge}
\newcommand{\T}{\Theta}
\DeclareMathOperator{\prim}{prim}
\DeclareMathOperator{\HH}{HH}
\DeclareMathOperator{\Spec}{Spec}
\DeclareMathOperator{\Ext}{Ext}
\DeclareMathOperator{\Ker}{Ker}
\DeclareMathOperator{\Cok}{coKer}
\DeclareMathOperator{\dd}{d}
\DeclareMathOperator{\ddim}{dim}
\newtheorem{thm}{Theorem}[section]
\newtheorem{cor}[thm]{Corollary}
\newtheorem{lem}[thm]{Lemma}
\theoremstyle{definition}
\newtheorem{rmk}[thm]{Remark}
\begin{document}

\begin{abstract}
We investigate the relation between the Hodge theory of a smooth subcanonical $n$-dimensional projective variety $X$ and the deformation theory of the affine cone $A_X$ over $X$. We start by identifying $H^{n-1,1}_{\mathrm{prim}}(X)$ as a distinguished graded component of the module of first order deformations of $A_X$, and later on we show how to identify the whole primitive cohomology of $X$ as a distinguished graded component of the Hochschild cohomology module of the punctured affine cone over $X$. In the particular case of a projective smooth hypersurface $X$ we recover Griffiths' isomorphism between the  primitive cohomology of $X$ and certain distinguished graded components of the Milnor algebra of a polynomial defining $X$. The main result of the article can be effectively exploited to compute Hodge numbers of smooth subcanonical  projective varieties. We provide a few example computation, as well a SINGULAR code, for Fano and Calabi-Yau threefolds.
\end{abstract}

\maketitle

\tableofcontents
 
 \section{Introduction}
Deformation Theory and Hodge Theory are known to be closely related. Examples of this friendship can be found, for example, in the theory of Variations of Hodge Structures, or in the Griffiths Residue Theory (see the original papers \cite{griffiths} or \cite{voisin2} for a more modern and detailed exposition). The latter identifies the Hodge structure of a smooth projective hypersurface with a subalgebra of the Milnor algebra of the hypersurface itself, an important deformation-theoretic invariant. More precisely, if 
\[
X=\{[x_0,\ldots,x_{n+1}]\,|\, f(x_0,\dots,x_{n+1})=0\}\subseteq \mathbb{P}_{\mathbb{C}}^{n+1}
\]
 is a smooth, degree $d$, $n$-dimensional projective hypersurface, Griffiths work establishes an isomorphism, given by a higher residue map
\[
H^{p-1, n+1-p}_{\prim}(X) \cong (\mathcal{M}_f)_{pd-n-2}.
\]
The subscript \emph{$\prim$}  in the above formula stands for primitive cohomology,  and the object on the right is the degree $pd-n-2$ component of the \emph{Milnor algebra} \[ \mathcal{M}_f:= \C[x_0, \ldots, x_{n+1}]/J_f, \] 
where $J_f=(\frac{\de f}{\de x_0}, \ldots,\frac{\de f}{\de x_{n+1}})$, is the Jacobian ideal of $f$. The Milnor algebra $\mathcal{M}_f$ contains deformation data for $X$, in the sense that its degree $d$  component can be identified with the space of first order embedded deformations of $X$ in $\mathbb{P}_{\mathbb{C}}^{n+1}$.\\
Let us tackle the general case: from now on fix $X\subseteq \mathbb{P}_{\mathbb{C}}^{N}$ to be a smooth complex projective variety of $\mathrm{dim}\,X=n$ and arbitrary codimension in some projective space and consider the space of first order infinitesimal deformations of the affine cone $A_X$ of $X$. This space of first order deformations is classically known in the literature as $T^1_{A_X}$ (see \cite{schlessinger} and \cite{sernesi}). It is a naturally graded vector space and in \cite{schlessinger} Schlessinger shows that its degree 0 component is identified with the space of first order embedded deformations of the variety $X$ in $\mathbb{P}_{\mathbb{C}}^{N}$. When $X$ is a hypersurface, this module is nothing but the Milnor algebra of $X$ (up to a shift of degree $d$), but in general it is a much richer and more complicated object.
\\
The superscript 1 in $T^1_{A_X}$ refers to the fact that this is indeed the degree 1 component of a graded module $T^\bullet_{A_X}$.
As we will see, there are several possible definitions for this graded module, but under reasonable assumptions -- for example, $X$ projectively normal -- we can take as definition
\[T^i_{A_X} := \Ext^i_{\of_{A_X}}(\Omega^1_{A_X}, \of_{A_X}).\]
Here, $T^1_{A_X}$ represents -- as already mentioned -- first order deformations of $A_X$, while $T^2_{A_X}$ and $T^0_{A_X}$ encode respectively obstructions and infinitesimal automorphisms. It is important to notice that in this case -- and more generally in the isolated-singularity case -- both $T^1_{A_X}$ and $T^2_{A_X}$ will be finite-dimensional.\\
Now let us assume that the embedding $X\hookrightarrow \mathbb{P}_{\mathbb{C}}^{N}$ is \emph{subcanonical}, i.e. that $\omega_X\cong \of_X(m)$ for some $m\in\Z$. Subcanonical varieties occupies a special case in literature: for a survey see for example \cite{depoi}, or \cite{arbarello}. Under this condition, we find an interesting generalization of Griffiths Residue Theorem. Our first results concern a deep relation between first order deformations, obstructions and automorphisms of $X$, and the Hodge theory of $X$:
\begin{thm} \label{Thm 1 Intr}
Let $X$ be a smooth complex subcanonical projective variety of dimension $n$, and let $m\in\mathbb{Z}$ be the integer such that $\omega_X\cong \of_X(m)$.
\begin{enumerate}
\item There is a natural isomorphism
\[(T^0_{A_X})_{m} \cong H^{n,0}_{\prim}(X). \]
\item If $H^1(X, \of_X(k))=0$ for every $k\in \mathbb{Z}$, then there is a natural isomorphism
\[(T^1_{A_X})_{m} \cong H^{n-1,1}_{\prim}(X).\]
\item If also $H^2(X, \of_X(k))=0$ for every $k\in \mathbb{Z}$, then there is a natural isomorphism
\[ (T^2_{A_X})_{m}\cong H^{n-2,1}_{\mathrm{prim}}(X). \]
%where $\lambda\colon H^{n-2,1}(X)\to H^{n-1,2}(X)$ is the Lefschetz isomorphism.
\end{enumerate}
\end{thm}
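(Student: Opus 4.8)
The plan is to reduce the computation of the graded module $T^\bullet_{A_X}$ to the cohomology of the tangent sheaf on the punctured cone and then to Hodge theory on $X$ via the Lefschetz operator. Write $U := A_X\setminus\{0\}$ for the punctured cone and $\pi\colon U\to X$ for the $\C^{*}$-bundle projection, so that $\pi_{*}\of_{U}=\bigoplus_{k\in\Z}\of_{X}(k)$ and the $\C^{*}$-weight decomposition of the cohomology of any $\pi$-pullback is the grading by $k$. Since $A_X$ is affine we have $T^{i}_{A_X}=\Ext^{i}_{\of_{A_X}}(\Omega^1_{A_X},\of_{A_X})=\Gamma(A_X,\mathcal{E}xt^{i})$, and because $X$ is smooth the sheaves $\mathcal{E}xt^{i}$ for $i\ge 1$ are supported at the vertex while $\mathcal{E}xt^{0}=\te_{A_X}$ restricts to $\te_{U}$ on $U$. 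The first step is therefore to establish a natural isomorphism of weight-$m$ pieces $(T^{i}_{A_X})_{m}\cong H^{i}(U,\te_{U})_{m}$ in the relevant range: for $i=0$ this is automatic, since $\te_{A_X}$ is reflexive and the vertex has codimension $n+1\ge 2$, so vector fields extend from $U$; for $i=1,2$ the discrepancy is measured by the local cohomology of $\mathbb{R}\mathcal{H}om(\Omega^1_{A_X},\of_{A_X})$ at the vertex, whose weight-$m$ part I will show is killed by the vanishing hypotheses.

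Once on $U$, I would push down to $X$ using the relative Euler sequence of the $\C^{*}$-bundle,
\[
0\to\of_{U}\to\te_{U}\to\pi^{*}\te_{X}\to 0,
\]
in which the subsheaf $\of_{U}$ is generated by the Euler (grading) vector field and therefore sits in weight $0$. Taking $\pi_{*}$, passing to cohomology and extracting the weight-$m$ summand produces the long exact sequence
\[
\cdots\to H^{i}(X,\of_{X}(m))\to (T^{i}_{A_X})_{m}\to H^{i}(X,\te_{X}(m))\xrightarrow{\ \delta\ } H^{i+1}(X,\of_{X}(m))\to\cdots
\]
The subcanonical hypothesis now enters twice: first $\of_{X}(m)\cong\omega_{X}=\Omega^{n}_{X}$, so $H^{i}(X,\of_{X}(m))=H^{n,i}(X)$; and second $\te_{X}\otimes\omega_{X}\cong\Omega^{n-1}_{X}$, so $H^{i}(X,\te_{X}(m))=H^{n-1,i}(X)$. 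Under these identifications the connecting map $\delta$ becomes cup product with the extension class of the Euler sequence, which in weight $0$ is the hyperplane class $h=c_{1}(\of_{X}(1))\in H^{1}(X,\Omega^1_{X})$; composing with the contraction $\te_{X}\otimes\Omega^1_{X}\to\of_{X}$, and twisting by $\omega_{X}$ so that contraction turns into the wedge $\Omega^{n-1}_{X}\otimes\Omega^1_{X}\to\Omega^{n}_{X}$, shows that $\delta$ is exactly the Lefschetz operator $L=\,\cdot\wedge h\colon H^{n-1,i}(X)\to H^{n,i+1}(X)$.

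The final step is the Hodge-theoretic extraction. For $i=0$ no vanishing is needed: the sequence gives $0\to H^{n,0}(X)\to (T^{0}_{A_X})_{m}\to\ker\big(L\colon H^{n-1,0}\to H^{n,1}\big)\to 0$, and since $L$ is injective below the middle degree by Hard Lefschetz the kernel vanishes, so $(T^{0}_{A_X})_{m}\cong H^{n,0}(X)=H^{n,0}_{\prim}(X)$, giving (1). For $i=1$, the hypothesis $H^{1}(X,\of_{X}(k))=0$ forces $H^{n,1}(X)=H^{1}(X,\omega_X)=0$, so the incoming $L$ has vanishing target and $(T^{1}_{A_X})_{m}\cong\ker\big(L\colon H^{n-1,1}\to H^{n,2}\big)$. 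By the Lefschetz decomposition $H^{n-1,1}=H^{n-1,1}_{\prim}\oplus L\,H^{n-2,0}_{\prim}$, and $L$ annihilates the primitive summand while remaining injective on $L\,H^{n-2,0}_{\prim}$; hence the kernel is exactly $H^{n-1,1}_{\prim}(X)$, giving (2). For $i=2$, the extra hypothesis $H^{2}(X,\of_{X}(k))=0$ gives $H^{n,2}(X)=0$ and $(T^{2}_{A_X})_{m}\cong\ker\big(L\colon H^{n-1,2}\to H^{n,3}\big)$; running the same Lefschetz-decomposition bookkeeping on $H^{n-1,2}=L\,H^{n-2,1}_{\prim}\oplus L^{2}H^{n-3,0}_{\prim}$ shows that $L$ kills precisely the summand $L\,H^{n-2,1}_{\prim}$ and is injective on the rest, so the kernel is $L\,H^{n-2,1}_{\prim}\cong H^{n-2,1}_{\prim}(X)$, giving (3) and explaining why the third isomorphism lands in $H^{n-2,1}_{\prim}$ rather than in the naively expected bidegree.

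The main obstacle I anticipate is the first step, the identification $(T^{i}_{A_X})_{m}\cong H^{i}(U,\te_{U})_{m}$: unlike the $i=0$ case it is not formal for $i=1,2$, because the local-cohomology correction terms at the vertex a priori contribute. The technical heart of the argument is to run the local-cohomology spectral sequence for $\mathbb{R}\mathcal{H}om(\Omega^1_{A_X},\of_{A_X})$ and to check that, after taking the weight-$m$ component, every correction term is a sum of groups of the form $H^{1}(X,\of_{X}(k))$ (for part (2)) and additionally $H^{2}(X,\of_{X}(k))$ (for part (3)); the stated vanishing hypotheses are calibrated exactly so that these corrections disappear and the restriction map to $U$ becomes an isomorphism in weight $m$. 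A secondary point requiring care is the verification that the extension class of the Euler sequence is genuinely the hyperplane class, so that $\delta$ is honestly the Lefschetz operator and the Hodge-theoretic identification of the kernels is legitimate.
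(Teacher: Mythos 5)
Your proposal is correct and follows essentially the same route as the paper: reduce $(T^i_{A_X})_m$ to $H^i(U_X,\Theta_{U_X})_m$ (the paper does this via Schlessinger's exact sequences on the punctured cone, which encode exactly the local-cohomology corrections by $H^1(X,\of_X(k))$ and $H^2(X,\of_X(k))$ that you describe), then use the Euler sequence, whose extension class is identified with $c_1(\of_X(1))$ following Atiyah, to get the long exact sequence with Lefschetz connecting maps, and finish with Hard Lefschetz and the Lefschetz decomposition. The only cosmetic difference is that in part (2) the paper kills the incoming term by noting $\lambda_{n-1,0}$ is an isomorphism rather than by invoking the vanishing of $H^{n,1}(X)$.
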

Notice that the apparently quite restrictive conditions $H^i(X, \of_X(k))=0$ for every $k\in\mathbb{Z}$ and for $i=1,2$ in the above theorem are actually satisfied by a large class of projective manifolds. Namely every arithmetically Cohen-Macaulay variety $X$ satisfies the vanishing condition for $i=1$ if $\ddim X\geq 2$ and satisfies also the condition for $i=2$ if $\ddim X\geq 3$.\\ 
The results stated in Theorem \ref{Thm 1 Intr} have a surprising generalisation in terms of derived categories. It is well known (see \cite{barannikov}) that, if $X$ is Calabi Yau, there is an identification between the space of infinitesimal deformations of the derived category $D(X)$ and the Hodge Theory of $X$, given by the celebrated Hochschild--Kostant--Rosenberg isomorphism (see \cite{caldararu}). \\ 
The story is indeed famous in the literature: in the spirit of the work of Kontsevich \cite{kontsevich}, one first identifies the Hochschild cohomology of $X$ with the cohomological algebra of polyvector fields 
\[ \HH^{\bullet}(X)= 
%\bigoplus_{k} \HH^{k}(X)=
 \bigoplus_k\left(\bigoplus _{p+q=k}\HH^{p,q}(X)\right)=\bigoplus_k\left(\bigoplus _{p+q=k}H^q(X, \W^p\T_X)\right)\] 
and then uses Serre duality and $\omega_X \cong \of_X$ to see how this precisely encodes the Hodge Theory of $X$. More intrinsically, this is the canonical isomorphism
\[ \HH^\bullet (X) \cong \HH_\bullet (X) [n], \] 
between the Hochschild cohomology and the shifted Hochshild homology of the Calabi-Yau manifold $X$. \\
%Note that in a somehow unusual way we decide to denote with $\T_X$ the holomorphic tangent sheaf, with the aim of not causing confusion with the $T^i$'s modules.
Of course the above isomorphism relies heavily on the triviality of the canonical sheaf $\omega_X \cong \of_X$. In the more general case of $\omega_X \cong \of_X(m)$ we will consider deformations of the derived category of the \emph{punctured affine cone} ${U_X}=A_X \smallsetminus \{0\}$. Indeed observe that Kontsevich's results still apply since ${U_X}$ is a smooth quasiprojective variety (see \cite{caldararu}). In particular, by using the $\mathbb{C}^*$-action which determines the grading, we will focus on the subspace of weight $m$ of the cohomology algebra of polyvector fields on ${U_X}$ and prove the following decomposition result.
\begin{thm} \label{Thm 2 Intr}
Let $X$ be a smooth complex subcanonical projective variety of dimension $n$, and let $m\in\mathbb{Z}$ be the integer such that $\omega_X\cong \of_X(m)$.
Then 
\[
\HH^{p,q}(U_X)_m \cong H^{n-p+1,q}_{\mathrm{prim}}(X)\oplus H^{n-q,p}_{\mathrm{prim}}(X).
\]

\end{thm}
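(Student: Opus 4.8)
The plan is to exploit the $\C^{*}$-action on $U_X$ geometrically, realising $U_X$ as the complement of the zero section in the total space of the line bundle $\of_X(-1)$, so that $\pi\colon U_X\to X$ is a $\C^{*}$-bundle. First I would use that the relative tangent sheaf of $\pi$ is trivialised by the Euler vector field $\mathscr{E}$, which is nowhere vanishing on $U_X$ and has weight $0$ for the $\C^{*}$-action. This gives the relative tangent sequence
\[0\to\of_{U_X}\to\T_{U_X}\to\pi^{*}\T_X\to0,\]
a short exact sequence of $\C^{*}$-equivariant sheaves whose maps preserve the weight grading. Taking $p$-th exterior powers and using that $\of_{U_X}=\of_{U_X}\cdot\mathscr{E}$ is a line subbundle yields, for every $p$,
\[0\to\W^{p-1}\pi^{*}\T_X\to\W^{p}\T_{U_X}\to\W^{p}\pi^{*}\T_X\to0,\]
the left-hand injection being $\mathscr{E}\wedge(-)$. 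Since $\pi$ is affine one has $R^{i}\pi_{*}=0$ for $i>0$ and $\pi_{*}\of_{U_X}=\bigoplus_{k\in\Z}\of_X(k)$, so by the projection formula and Leray the weight-$k$ component of $H^{q}(U_X,\W^{p}\pi^{*}\T_X)$ is $H^{q}(X,\W^{p}\T_X(k))$, and similarly for $\W^{p-1}$.

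Next I would extract the weight-$m$ part of the long exact cohomology sequence attached to the exterior-power sequence. Because the connecting maps preserve weight, this produces
\[\cdots\to H^{q}(X,\W^{p-1}\T_X(m))\to\HH^{p,q}(U_X)_m\to H^{q}(X,\W^{p}\T_X(m))\xrightarrow{\ \delta\ }H^{q+1}(X,\W^{p-1}\T_X(m))\to\cdots.\]
Here the subcanonical hypothesis enters decisively: using $\W^{p}\T_X\cong(\Omega^{p}_X)^{\vee}$, Serre duality on $X$, and $\omega_X\cong\of_X(m)$, the twists cancel exactly and one obtains natural isomorphisms
\[H^{q}(X,\W^{p}\T_X(m))\cong H^{n-q}(X,\Omega^{p}_X)^{\vee}\cong H^{n-p,q}(X),\qquad H^{q}(X,\W^{p-1}\T_X(m))\cong H^{n-p+1,q}(X).\]
Under these identifications the connecting map $\delta$ becomes a map $H^{n-p,q}(X)\to H^{n-p+1,q+1}(X)$ of Hodge bidegree $(1,1)$.

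The key geometric input is then to identify $\delta$, up to a nonzero scalar, with the Lefschetz operator $L=h\cup(-)$, where $h=c_{1}(\of_X(1))\in H^{1,1}(X)$. This should follow because the extension class of the relative tangent sequence is the Atiyah class of $\of_X(-1)$, namely $-h$, so that after Serre duality intertwines contraction against $h$ on $\W^{p}\T_X(m)$ with wedging by $h$ on $\Omega^{p}_X$, the map $\delta$ is cup product with $h$. Granting this, the long exact sequence collapses to the short exact sequence
\[0\to\operatorname{coker}\!\big(L\colon H^{n-p,q-1}(X)\to H^{n-p+1,q}(X)\big)\to\HH^{p,q}(U_X)_m\to\ker\!\big(L\colon H^{n-p,q}(X)\to H^{n-p+1,q+1}(X)\big)\to0.\]

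Finally I would compute the two outer terms by Hard Lefschetz and the Lefschetz decomposition. The decomposition gives $\operatorname{coker}(L\colon H^{a-1,b-1}\to H^{a,b})\cong H^{a,b}_{\prim}(X)$ for all $(a,b)$, with the convention that primitive cohomology vanishes in total degree $>n$, whence the left term is $H^{n-p+1,q}_{\prim}(X)$. For the kernel, the same decomposition together with the injectivity of the top Lefschetz power shows $\ker(L\colon H^{a,b}\to H^{a+1,b+1})\cong L^{a+b-n}H^{n-b,n-a}_{\prim}(X)\cong H^{n-b,n-a}_{\prim}(X)$, whence the right term is $H^{n-q,p}_{\prim}(X)$. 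Since a short exact sequence of finite-dimensional $\C$-vector spaces splits, this yields the asserted isomorphism $\HH^{p,q}(U_X)_m\cong H^{n-p+1,q}_{\prim}(X)\oplus H^{n-q,p}_{\prim}(X)$. I expect the main obstacle to be the careful identification of the connecting map $\delta$ with the Lefschetz operator, together with the attendant $\C^{*}$-weight and Serre-duality bookkeeping; once that is in place, the remaining kernel and cokernel computation is a clean application of Hard Lefschetz.
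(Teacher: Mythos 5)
Your proposal is correct and follows essentially the same route as the paper: exterior powers of the Euler/relative tangent sequence on the $\C^*$-bundle $U_X\to X$, identification of the connecting map (via the Atiyah class of $\of_X(1)$ and the subcanonical duality $\W^p\T_X(m)\cong\Omega_X^{n-p}$) with the Lefschetz operator, and then a kernel/cokernel computation by Hard Lefschetz giving $H^{n-p+1,q}_{\prim}(X)$ and $H^{n-q,p}_{\prim}(X)$. The only cosmetic difference is that the paper observes that at most one of the two summands is nonzero (so the isomorphism is canonical and no splitting argument is needed), whereas you invoke the splitting of a short exact sequence of vector spaces.
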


Theorem \ref{Thm 2 Intr} enables us to reinterpret the (embedded) Hodge Theory of $X$ as a piece of the deformation theory of the derived category of punctured cone $A$. Moreover, we can recover Theorem \ref{Thm 1 Intr} as a special case of this result. As a matter of fact if $\mathrm{dim}(X)\geq 2$ and
%depth$_0A_X \geq 3$ -- which follows from the condition 
$H^1(X, \of_X(k))=0$ for every $k\in \mathbb{Z}$
%, whenever $\mathrm{dim}(X)\geq 2$ -- 
%we can ignore the contribution from the vertex and so 
then we get an isomorphism 
\[
(T^1_{A_X})_m \cong H^1({U_X}, \T_{U_X})_m=H^{n-1,1}_{\prim}(X), 
\]
where we used that the factor $H^{n,1}_{\prim}(X)$ vanishes by Hard Lefschetz. Similarly, one gets the description for $T^0_{A_X}$ and $T^2_{A_X}$.

\subsubsection*{Acknowledgements}
The original idea of using the $T^1$ came out after several discussion with Edoardo Sernesi, and we are deeply grateful to him. EF would like to thank his supervisor Miles Reid for constant encouragement and support, and also Stephen Coughlan, Alice Cuzzucoli, Lorenzo De Biase, Paolo Tripoli for useful discussions and suggestions, and Rosemary Taylor for proofreading. CDN and EF would like to thank also Luca Migliorini for his ideas and encouragements in the early stages. Part of this work was completed during the visit of DF and CDN at the University of Warwick for the ``2014-2015 Warwick EPSRC Symposium on Derived Categories and Applications'', and we would like  to thank the organizer of this event. Part of this work was also completed whilst EF was visiting Korea Institute for Advanced Studies (KIAS): the author would like to thank again Professor Miles Reid for inviting and all the staff and academic member of the institute.

\section{The Infinitesimal Deformation Module of an Affine Isolated Singularity}\label{sec.Ax} 
In this section we will see that -- under very mild assumptions -- various pieces of the Hodge Theory of a smooth projective variety can be described in terms of the classical deformation modules of the affine cone over it.
\subsection{A Quick Review in Deformation Theory}
Let $Y$ be a (reasonable) complex scheme and $R$ a local Artin $\mathbb C$-algebra; recall that an infinitesimal deformation of $Y$ over $R$ is is the datum of a pull-back diagram 
\[\xymatrix{
Y \ar[r]^{i}\ar[d] & \mathcal{Y}\ar[d]^{p}\\
\Spec(\C)\ar[r] & \Spec(R)
}\]

%\[
%\begindc{\commdiag}[200]
%
%
%\obj(1,5)[15]{$Y$}
%\obj(4,5)[35]{$\mathcal{Y}$}
%\obj(1,3)[13]{ $\Spec(\C)$}
%\obj(4,3)[33]{ $\Spec(R)$}
%
%\mor{15}{35}{$i$}
%\mor{15}{13}{}
%\mor{13}{33}{}
%\mor{35}{33}{$p$}
%\enddc
%\]
with $i$ a closed immersion and $p$ a flat and proper morphism. Equivalently, a deformation of $Y$ over $R$ can be viewed as a morphism of sheaves of $R$-algebras $\of_R\rightarrow\of_Y$ such that $\of_R$ is flat over $R$ and $\of_R\otimes_R\mathbb C\simeq \of_Y$. In more concrete terms, defining a deformation of a scheme $Y$ means defining a flat family of schemes over a fat point whose central fibre is isomorphic to $Y$. Of course there is an obvious notion of isomorphism of deformations, which we will not spell out here in detail. \\
Studying the Deformation Theory of a scheme $Y$ means indeed analysing the functor
\begin{eqnarray} \label{Def_Y}
\mathrm{Def}_Y:&\mathfrak{Art}_{\mathbb C}&\xrightarrow{\hspace*{2cm}}\mathfrak{Set} \nonumber \\
&R&\mapsto\frac{\left\{\text{deformations of }Y\text{ over }R\right\}}{\text{isomorphism}}.
\end{eqnarray}
which is known to satisfy Schlessinger's axiomatics of deformation functor (for more details see \cite{schlessinger2}). \\ 
It is now well-established that functor $\mathrm{Def}_Y$ is governed by a fundamental invariant of the scheme $Y$ -- originally envisioned by Grothendieck, then deeply studied by Andr\' e, Quillen, Deligne, Illusie and many others -- that is the cotangent complex $\mathbb L_{Y}$. The most concrete way to define the cotangent complex is the following: start with an affine  $\mathbb C$-scheme $\mathrm{Spec}(B)$ and consider a quasi-free resolution $Q^{\bullet}(B)$. Now apply the K\" ahler-differential functor to $Q^{\bullet}\left(B\right)$ and get the complex of $B$-modules $\Omega^1_{Q^{\bullet}(B)}$. Set the (absolute) cotangent complex of $\mathrm{Spec}(B)$ to be
\begin{equation} \label{ctgnt cmplx 1}
\mathbb L_{\mathrm{Spec}(B)}:=\Omega^1_{Q^{\bullet}(B)}
\end{equation}
and notice that formula \eqref{ctgnt cmplx 1} provides us with a perfectly well-defined notion in the derived category $D\left(\mathfrak{Mod}_B\right)$. As a matter of fact if we change the quasi-free resolution of $B$ we end up with the same object in the derived category. In more formal terms, we can say that the cotangent complex is the total left derived functor (in the sense of Quillen Homotopical Algebra) of the functor of K\" ahler differentials. The construction of the cotangent complex as given in formula \eqref{ctgnt cmplx 1} sheafifies, so we can associate to our scheme $Y$ its absolute cotangent complex $\mathbb L_{Y}$, which lives in the derived category $D(\mathfrak{QCoh}(Y))$. \\
In general the complex $\mathbb L_{Y}$ is made of terms lying in both positive and negative degrees and encodes all the invariants determining the functor $\mathrm{Def}_{Y}$ through its Ext groups; we recommend \cite{schlessingerlicht} for an introduction to the subject. More precisely we will consider the modules
%\begin{equation*}
%T^1_Y:=\mathrm{Ext}^1_{\of_Y}(\mathbb L_{Y},\of_Y)\qquad T^2_Y:=\mathrm{Ext}^2_{\of_Y}(\mathbb L_{Y},\of_Y)\qquad T^0_Y:=\mathrm{Ext}^0_{\of_Y}(\mathbb L_{Y},\of_Y)
%\end{equation*}
\begin{equation*}
T^i_Y:=\mathrm{Ext}^i_{\of_Y}(\mathbb L_{Y},\of_Y),
\end{equation*}
which, for $i=0,1,2$ are called  the automorphism module, the first-order deformations module, and the obstruction module associated to $Y$, respectively.  It is now well-understood that:
\begin{enumerate}
\item $T^0_Y$ encodes infinitesimal automorphisms of the scheme $Y$;
\item $T^1_Y$ parametrises first-order deformations of $Y$, i.e deformations over the ring of dual numbers $\frac{\mathbb C\left[t\right]}{t^2}$;
\item $T^2_Y$ contains obstructions, meaning that for all surjection $S\twoheadrightarrow R$ of local Artin $\mathbb{C}$-algebras the obstructions to lifting a $R$-deformation of $Y$ to a $S$-deformation live in $\mathrm{Ext}^{2}(\mathbb L_{Y,\of_Y})$.
\end{enumerate}
\begin{rmk} \label{derived}
The above statements tell us that the cotangent complex $\mathbb L_{Y}$ is a much richer object than the deformation functor $\mathrm{Def}_{Y}$: for example, as a set-valued functor, $\mathrm{Def}_{Y}$ is not able to see the infinitesimal automorphisms of $Y$, which are actually captured by $\mathbb L_{Y}$. Also, the higher Ext groups of $\mathbb L_{Y}$ describe some more subtle deformations of $Y$ usually called higher obstructions: these objects are extremely important in the new Derived Algebraic Geometry. See, e.g., \cite{schumi} for the relation between the cohomology of the wedge powers of the (co-)tangent complex of $Y$ and its extended derived deformations.
 However, in the rest of the paper we will only be interested in studying classical deformation invariants, therefore we will only care about the three above mentioned groups. \end{rmk}

From now on we will focus on the study of the deformation modules attached to an affine cone over a smooth projective variety. Throughout the rest of the paper fix $X$ to be a smooth projectively normal variety and call $A_X$ the affine cone over it, so that $A_X=\Spec(\mathfrak{a}_X)$, where
\[
\mathfrak{a}_X= \bigoplus_k H^0(X, \of_X(k)).
\]
We will also suppose everywhere that the projective embedding of $X$ is subcanonical, so that $\omega_X \cong \of_X(m)$ for some integer $m$. Key references for most of the considerations below are \cite{schlessinger}, \cite{wahl}. We stress that their results hold in the more general case of an affine isolated singularity, but here we only consider the case of affine cones over smooth projective varieties.\\ 
Consider the $T^1$-deformation module 
\[ T^1_{A_X}:=\Ext^1_{\of_{A_X}}(\mathbb L_{A_X}, \of_{A_X}),  \] 
which measures the first-order deformations of the affine cone $A_X$: in particular notice that it is equipped with a natural grading, induced by the fact that $\mathfrak{a}_X$ is a graded algebra itself. The cone $A_X$ may deform in several ways and each graded component of $T^1_{A_X}$ roughly speaking represents the degree of the polynomial we are adding in order to deform. In the case $X$ is a degree $d$ hypersurface with defining polynomial $f$ the graded module $T^1_{A_X}$ coincides, up to a shift of $-d$, with the Milnor Algebra $\mathcal{M}_f$, i.e. we have
\[
T^1_{A_X}[-d]\cong \mathcal{M}_f
\]
as graded modules.\\
Now, not all deformations of $A_X$ lead to another affine cone over a projective variety. The easiest example is the case of $xy=0$ in $\C^2$ that is the cone over the points $[1,0],[0,1] \in \PP^1$. In this case, up to isomorphism the only possible first-order deformation of the cone is given by $xy+\varepsilon=0$, and any element of this family is not a cone over a projective variety except for $\varepsilon=0$. In fact this is straightforward to verify: just notice that the Milnor algebra is 
\[ (\mathcal{M}_f)_{m}\cong (\C[x,y]/(x,y))_m \cong \begin{cases}\C \text { if $m=0$}\\
0\,\text{ if $m\neq 0$}
\end{cases} , \]
hence $(T^1_{A_X})_0=(\mathcal{M}_f)_{2}=0$. 
A very natural question arises: what are the deformations of $A_X$ that lead to family of affine cones over a projective variety? The naive answer is that the polynomials that we add in order to deform must be homogeneous of the same degree as the (homogeneous) equations of $A_X$. Luckily, this is also the correct one. \\
So far, all of what we said is very classical: under the above interpretation, the degree 0 piece of the deformation module of the affine cone over the projective variety $X$ represents the embedded first-order deformations of $X$ inside $\PP^{N}$.
Somehow more precisely, we have an exact sequence 
\[ \ldots \rightarrow H^1(X, \of_X) \rightarrow (T^1_{A_X})_0 \rightarrow H^1(X, \Theta_X) \rightarrow H^2(X, \of_X) \rightarrow \ldots \]
Notice that if the two side terms are both zero, we have an isomorphism between $ (T^1_{A_X})_0$ and $H^1(X, \Theta_X)$, which allows us to identify $(T^1_{A_X})_0$ with all infinitesimal deformations of $X$. This is for example the case of a smooth Calabi-Yau of dimension $\geq 3$; by definition we have $H^1(X, \of_X)=H^2(X, \of_X)=0$. This coincides with the standard fact that all Calabi-Yau from dimension 3 onwards are projective, while for example in the K3 case we have a 19-dimensional algebraic family inside a 20-dimensional deformation space, recorded by the fact $H^{2}(X, \of_X) \cong H^{0}(X, \omega_X) \cong \C$. In general when either $H^{0,1}(X)$ or $H^{0,2}(X)$ are non zero  there is generally a difference between embedded deformations and non-embedded ones.\\

\subsection{$T^1$ and Hodge Theory}
Now we want to explore deeper the relation between $T^1_{A_X}$, the smooth projective variety\footnote{Actually, everything in what follows holds more generally for quasi-smooth varieties in weighted projective spaces.} $X$, and the punctured cone $U_X:=A_X \setminus\{ 0\}$. 
\begin{lem}\label{lemma.first}
For every $k\in \mathbb{Z}$, the relative tangent sheaf exact sequence
\[
0 \rightarrow \Theta_{U_X/X} \rightarrow \Theta_{U_X} \stackrel{\dd\pi}{\rightarrow} \pi^*(\Theta_X) \rightarrow 0
\]
induces a long exact sequence 
\[
\ldots \rightarrow H^1(X, \of_X(k)) \rightarrow H^1({U_X}, \T_{U_X})_{k} \rightarrow H^1(X, \Theta_X(k)) \stackrel{\lambda}{\rightarrow} H^2(X, \of_X(k)) \rightarrow \ldots .\]
where the maps $\lambda$ are the Lefschetz operators.
\end{lem}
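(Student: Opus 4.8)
The plan is to push the relative tangent sequence down to $X$ along $\pi$, read off its weight-$k$ graded pieces for the natural $\C^*$-action, feed the resulting short exact sequence of sheaves on $X$ into the long exact cohomology sequence, and finally identify the connecting homomorphism with cup product by the hyperplane class.

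First I would record that $\pi\colon U_X\to X$ is the complement of the zero section in the total space of the tautological bundle $\of_X(-1)$, hence a $\C^*$-bundle and in particular an \emph{affine} morphism; its relative tangent sheaf is canonically trivialised, $\T_{U_X/X}\cong\of_{U_X}$, by the Euler vector field $E$, which has weight $0$ under the grading. Since $\pi$ is affine, $\pi_*$ is exact and $R^{>0}\pi_*$ vanishes on quasi-coherent sheaves, so applying $\pi_*$ to the relative tangent sequence yields a short exact sequence of $\C^*$-graded $\of_X$-modules
\[
0\to\pi_*\T_{U_X/X}\to\pi_*\T_{U_X}\to\pi_*\pi^*\T_X\to 0 .
\]

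Next I would extract the weight-$k$ components. From $\pi_*\of_{U_X}=\bigoplus_k\of_X(k)$ together with the trivialisation by $E$ one gets $(\pi_*\T_{U_X/X})_k\cong\of_X(k)$, while the projection formula gives $(\pi_*\pi^*\T_X)_k\cong\T_X(k)$. Hence the weight-$k$ part of the sequence above reads
\[
0\to\of_X(k)\to(\pi_*\T_{U_X})_k\to\T_X(k)\to 0 .
\]
Passing to the associated long exact sequence in cohomology on $X$, and using once more that $\pi$ is affine so that $H^i(U_X,\T_{U_X})_k\cong H^i\big(X,(\pi_*\T_{U_X})_k\big)$ compatibly with the grading, produces exactly the stated long exact sequence, leaving only the nature of the boundary map $\lambda\colon H^1(X,\T_X(k))\to H^2(X,\of_X(k))$ to be pinned down.

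Finally, and this is the heart of the matter, I would identify $\lambda$ with the Lefschetz operator. The boundary map is cup product with the extension class $e\in\Ext^1_{\of_X}(\T_X(k),\of_X(k))\cong H^1(X,\Omega^1_X)$ of the displayed sequence, followed by the contraction $\T_X\otimes\Omega^1_X\to\of_X$. The key claim is that this weight-$k$ extension is the $k$-th twist of the Atiyah sequence of $\of_X(-1)$, whose extension class is the first Atiyah class $c_1(\of_X(-1))=-c_1(\of_X(1))$; twisting by $\of_X(k)$ leaves it unchanged under the canonical isomorphism $\Ext^1(\T_X,\of_X)\cong\Ext^1(\T_X(k),\of_X(k))$. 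Thus $\lambda$ is, up to sign, cup product with the hyperplane class $h=c_1(\of_X(1))\in H^{1,1}(X)$, that is, the Lefschetz operator. I expect this last step to be the main obstacle, since one must check that the extension class is genuinely the Chern class of the tautological bundle rather than some other element of $H^{1,1}(X)$. The cleanest verification is a direct computation on a cover $\{V_\alpha\}$ trivialising $\of_X(-1)$ with transition functions $g_{\alpha\beta}$: the local lifts $t_\alpha^{\,k}\de_{x_i}$ of a frame of $\T_X(k)$ fail to glue by the terms $-\de_{x_i}\!\big(\log g_{\alpha\beta}\big)\,E$, whose \v{C}ech cocycle $\{-\dd\log g_{\alpha\beta}\}$ represents $c_1(\of_X(-1))$ in the $\of_X(k)\cdot E$ summand, independently of $k$.
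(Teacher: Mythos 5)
Your proof is correct and rests on the same two pillars as the paper's argument: the Euler trivialisation $\T_{U_X/X}\cong\of_{U_X}$ and the identification of the extension class with the Atiyah/Chern class of the tautological bundle. The one genuine difference is organisational. You push forward along the affine morphism $\pi$ first and extract the weight-$k$ short exact sequence $0\to\of_X(k)\to(\pi_*\T_{U_X})_k\to\T_X(k)\to 0$ of sheaves on $X$ \emph{before} taking cohomology, so the connecting homomorphism lands in $H^2(X,\of_X(k))$ by construction. The paper instead takes cohomology on $U_X$ and must then argue separately that the total extension class, a priori an element of $\bigoplus_s H^1(X,\Omega^1_X(s))$, is concentrated in degree $s=0$; it does so by showing (via the projection formula) that every pushed-forward weight-$k$ sequence is the $\of_X(k)$-twist of the single weight-$0$ sequence. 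Your route makes that degree-preservation step automatic, but you still need its content --- that the weight-$k$ extension is the twist of the weight-$0$ Atiyah sequence --- in order to pin down the class, and your proposed \v{C}ech computation with the cocycle $\{-\dd\log g_{\alpha\beta}\}$ is a standard and adequate way to do this, where the paper simply quotes Atiyah. The only blemish is the sign ($c_1(\of_X(1))$ versus $c_1(\of_X(-1))$), which depends on conventions and is harmless for everything the lemma is used for, since only kernels and cokernels of $\lambda$ enter later; you acknowledge this explicitly.
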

\begin{proof}
The Euler vector field gives a trivialization $\Theta_{U_X/X}\cong \of_{U_X}$, see  \cite{badescu}. We therefore get the short exact sequence \[ 0 \rightarrow \of_{U_X} \rightarrow \Theta_{U_X} \rightarrow \pi^*(\Theta_X) \rightarrow 0,\] and so, passing to cohomology, the long exact sequence \[ \ldots \rightarrow \bigoplus_{k \in \Z} H^1(X, \of_X(k)) \xrightarrow{\lambda} \bigoplus_{k \in \Z}H^1({U_X}, \T_{U_X})_k \rightarrow \bigoplus_{k \in \Z} H^1(X, \Theta_X(k)) \rightarrow \bigoplus_{k \in \Z} H^2(X, \of_X(k)) \rightarrow \ldots \] where the grading on $H^1({U_X}, \T_{U_X})$ is induced by the $\C^*$-action, and the connecting homomorphism $\lambda$ is the cup product with the extension class \[\Lambda:=\left[0\rightarrow\of_{U_X}\rightarrow\T_{U_X}\rightarrow\pi^*\T_X\rightarrow 0\right],\] which is an element in
 \[\mathrm{Ext}_{U_X}^1(\pi^*\T_X,\of_{U_X})\cong H^1(U_X,\pi^*\Omega^1_X)\cong\underset{s}{\bigoplus}H^1(X,\Omega^1_X(s)),\]
 see \cite{schlessinger}([Lemma 1, page 158]) and \cite{sernesi}. Notice that the map $\lambda$ is not a priori a morphism of graded modules: we should expect it to have several homogeneous components
 \begin{equation*}
\lambda_s: H^i(X,\T_X(k))\longrightarrow H^{i+1}(X,\of_X(k+s)),
\end{equation*}
which are identified with cohomology classes in $H^1(X,\Omega^1(s))$.
So our next step consists in showing that actually $\lambda$ reduces to its degree zero component $\lambda_0$, i.e., that $\Lambda$ consists into a single cohomology class in $H^1(X,\Omega^1)$. To see this, recall from  \cite{atiyah} (see \cite{contact} for a more modern treatment) that given a line bundle $L$ on a smooth complex manifold $X$, if we denote by $L^{\circ}$ the total space of the dual bundle $L^*$ with the zero section removed, then we have a canonical
 short exact sequence of sheaves of $\mathcal{O}_{L^{\circ}}$-modules
 \[
  0 \to \pi^*\Omega^1_X \to \Omega^1_{L^{\circ}} \to \mathcal{O}_{L^{\circ}} \to 0.
 \]
 Pushing forward to $X$ we get for every $k\in \mathbb{Z}$ a short exact sequence
 \[ 0 \to \Omega^1_X(k) \to \mathcal{L}_k \to L^{\otimes k}\to 0,\]
 where $\mathcal{L}_k$ denotes the degree $k$ component of $\pi_*\Omega^1_{L^{\circ}}$. In particular, if $L=\mathcal{O}_X(1)$, so that $L^{\circ}\cong U_X$, we get the short exact sequences
 \[ 0 \to \Omega^1_X(k) \to (\pi_*\Omega^1_{U_X})_k \to \of_X(k)\to 0,\]
 and the projection formula together with the isomorphisms $\pi^*\of_X\cong \of_{U_X}\cong \pi^*\of_X(k)$, shows that these are indeed all obtained from the single short exact sequence
 \[
 0 \to \Omega^1_X \to (\pi_*\Omega^1_{U_X})_0 \to \of_X\to 0
 \] 
by tensoring it by $\of_X(k)$. This implies that the extension class $[0 \to \Omega^1_X(k) \to (\pi_*\Omega^1_{U_X})_k \to \of_X(k)\to 0]$ is actually independent of $k$, and so the total extension class $[0 \to \pi^*\Omega^1_X \to \Omega^1_{U_X} \to \mathcal{O}_{U_X} \to 0]$ reduces to the extension class $[0 \to \Omega^1_X \to (\pi_*\Omega^1_{U_X})_0 \to \of_X\to 0]$, which is an element in $\mathrm{Ext}_{X}^1(\T_X,\of_{X})\cong H^1(X,\Omega_X^1)$, see \cite{bredon}. Since the short exact sequence $0 \to \pi^*\Omega^1_X \to \Omega^1_{U_X} \to \mathcal{O}_{U_X} \to 0$ is the dual of the short exact sequence $0 \rightarrow \of_{U_X} \rightarrow \Theta_{U_X} \rightarrow \pi^*(\Theta_X) \rightarrow 0$ we finally see that the connecting homomorphism $\lambda$ is indeed of degree zero and is given by the cup product with a distinguished element $\Lambda$ in $H^1(X,\Omega^1_X)$. Since $\lambda$ is a degree zero operator, it preserves the gradings, and so for every degree $k$ we have a long exact sequence
\[ \ldots \rightarrow  H^1(X, \of_X(k)) \xrightarrow{\lambda} H^1({U_X}, \T_{U_X})_k \rightarrow  H^1(X, \Theta_X(k)) \rightarrow H^2(X, \of_X(k)) \rightarrow \ldots \]
To conclude we have to identify $\Lambda$  with the class of an hyperplane. Again, we refer to \cite{atiyah}, where it is shown that, for a general line bundle $L$, the extension class $[0 \to \Omega^1_X \to \mathcal{L}_0 \to \of_X\to 0]$ is the first Chern class $c_1(L)$. So, for $L= \mathcal{O}_X(1)$ we find that the extension class is $c_1(\of_X(1))$, as desired. 
\end{proof}

\begin{cor}\label{cor.first}
Let $X$ be a smooth subcanonical projectively normal variety of dimension $n$, and let $m\in \mathbb{Z}$ be the integer such that $\omega_X \cong \of_X(m)$. Then the relative tangent sheaf exact sequence
\[
0 \rightarrow \Theta_{U_X/X} \rightarrow \Theta_{U_X} \stackrel{\dd\pi}{\rightarrow} \pi^*(\Theta_X) \rightarrow 0
\]
induces a long exact sequence 
\[
\ldots \rightarrow H^{n-1,0}(X) \stackrel{\lambda}{\rightarrow} H^{n,1}(X) \rightarrow H^1({U_X}, \T_{U_X})_{m} \rightarrow H^{n-1,1}(X) \stackrel{\lambda}{\rightarrow} H^{n,2}(X) \rightarrow \ldots
\]
where the maps $\lambda$ are the Lefschetz operators.
\end{cor}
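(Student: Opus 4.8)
The plan is to deduce the statement directly from Lemma~\ref{lemma.first} by specialising the grading to $k=m$ and then translating every coherent cohomology group appearing there into a Hodge group, using the subcanonical hypothesis $\of_X(m)\cong\omega_X$ together with Dolbeault's theorem. Concretely, I would first write out the degree-$m$ strand of the long exact sequence of Lemma~\ref{lemma.first}, namely
\[
\cdots \to H^0(X,\Theta_X(m)) \xrightarrow{\lambda} H^1(X,\of_X(m)) \to H^1(U_X,\T_{U_X})_m \to H^1(X,\Theta_X(m)) \xrightarrow{\lambda} H^2(X,\of_X(m)) \to \cdots
\]
Here I have extended the sequence of Lemma~\ref{lemma.first} one term to the left, which is harmless: the proof of that lemma shows that the connecting homomorphism in \emph{every} degree $i$ is cup product with the degree-zero class $\Lambda$, so the map $H^0(X,\Theta_X(m))\to H^1(X,\of_X(m))$ is again the operator $\lambda$.

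The next step is to identify the two families of groups. For the structure-sheaf terms I would use $\of_X(m)\cong\omega_X=\Omega^n_X$, so that $H^i(X,\of_X(m))\cong H^i(X,\Omega^n_X)=H^{n,i}(X)$ by Dolbeault; this gives $H^1(X,\of_X(m))\cong H^{n,1}(X)$ and $H^2(X,\of_X(m))\cong H^{n,2}(X)$. For the tangent-sheaf terms I would produce the natural isomorphism $\Theta_X(m)\cong\Omega^{n-1}_X$, which is the one genuine sheaf-theoretic computation: it comes from the canonical isomorphism $\W^{n-1}\Omega^1_X\cong(\Omega^1_X)^\vee\otimes\W^n\Omega^1_X=\Theta_X\otimes\omega_X$, valid for the rank-$n$ bundle $\Omega^1_X$, combined with $\omega_X\cong\of_X(m)$. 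Dolbeault then yields $H^i(X,\Theta_X(m))\cong H^i(X,\Omega^{n-1}_X)=H^{n-1,i}(X)$, hence $H^0\cong H^{n-1,0}(X)$ and $H^1\cong H^{n-1,1}(X)$. Substituting these four identifications into the displayed strand reproduces verbatim the sequence in the statement.

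Finally I would verify that under these identifications the connecting maps remain the Lefschetz operators. By Lemma~\ref{lemma.first} the connecting map is cup product with $\Lambda=c_1(\of_X(1))\in H^1(X,\Omega^1_X)$; the isomorphisms of the previous step are induced by tensoring with the line bundle $\omega_X$, which sends $\Lambda\in\Ext^1_X(\T_X,\of_X)$ to the corresponding class in $\Ext^1_X(\Omega^{n-1}_X,\Omega^n_X)\cong H^1(X,\Omega^1_X)$ without change. Thus each connecting map becomes cup product with the hyperplane class $c_1(\of_X(1))$ acting on Hodge cohomology, i.e.\ the Lefschetz operator $L\colon H^{p,q}(X)\to H^{p+1,q+1}(X)$, with $(p,q)=(n-1,0)$ and $(p,q)=(n-1,1)$ respectively. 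The main obstacle is precisely this last compatibility: one must check that the isomorphism $\Theta_X\otimes\omega_X\cong\Omega^{n-1}_X$ is natural enough to intertwine cup-product-with-$\Lambda$ on the left with wedge-with-$c_1(\of_X(1))$ on the right, rather than merely furnishing an abstract map between the sequences. Everything else is a formal substitution into Lemma~\ref{lemma.first}.
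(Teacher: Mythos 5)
Your proposal is correct and follows essentially the same route as the paper: specialise the long exact sequence of Lemma~\ref{lemma.first} to $k=m$ and identify $H^i(X,\of_X(m))\cong H^{n,i}(X)$ and $H^i(X,\Theta_X(m))\cong H^{n-1,i}(X)$ via $\omega_X\cong\of_X(m)$ and the duality $\Theta_X\otimes\omega_X\cong\Omega^{n-1}_X$. Your extra care about the compatibility of the connecting maps with these identifications is a point the paper leaves implicit, but it is not a different argument.
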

\begin{proof}
Since $\omega_X \cong \of_X(m)$, Serre duality gives canonical isomorphisms \[H^i(X, \Theta_X(m))\simeq H^{n-1,i}(X)\] and \[H^i(X, \of_X(m))\simeq H^{n,i}(X).\] The result then follows from Lemma \ref{lemma.first} for $k=m$.
\end{proof}
We are now ready to prove the main result of this section.
\begin{thm}\label{t1} Let $X$ be a smooth subcanonical projectively normal variety of dimension $n$, and let $m\in \mathbb{Z}$ be the integer such that $\omega_X \cong \of_X(m)$. There is a natural isomorphism 
\[H^1(U, \Theta_{U_X})_{m} \cong H^{n-1,1}_{\mathrm{prim}}(X).\]
\end{thm}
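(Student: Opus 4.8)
The plan is to derive the isomorphism directly from the long exact sequence produced in Corollary~\ref{cor.first}, by analysing the two Lefschetz operators $\lambda$ that bound the term $H^1(U_X,\T_{U_X})_m$ with the help of the Hard Lefschetz theorem. Writing $L$ for cup product with the hyperplane class $c_1(\of_X(1))$, the relevant segment of that sequence reads
\[
H^{n-1,0}(X)\xrightarrow{\ \lambda\ } H^{n,1}(X)\xrightarrow{\ \alpha\ } H^1(U_X,\T_{U_X})_m \xrightarrow{\ \beta\ } H^{n-1,1}(X)\xrightarrow{\ \lambda\ } H^{n,2}(X),
\]
and by exactness it suffices to prove two facts: that the left-hand $\lambda$ is surjective (so that $\alpha=0$ and $\beta$ is injective), and that the kernel of the right-hand $\lambda$ is exactly $H^{n-1,1}_{\prim}(X)$ (so that $\beta$ is an isomorphism onto it).

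For the left-hand map, first I would note that $H^{n-1,0}(X)=H^0(X,\Omega^{n-1}_X)$ is automatically primitive: in the Lefschetz decomposition of $H^{n-1}(X)$ the summand $L^j H^{n-1-j,-j}_{\prim}$ can be nonzero only for $j=0$, since Hodge indices are nonnegative. By Hard Lefschetz the operator $L\colon H^{n-1}(X)\to H^{n+1}(X)$ is injective, hence so is $\lambda$ on the summand $H^{n-1,0}$. For surjectivity I would read off the primitive decomposition of the target: $H^{n,1}(X)$, being the $(n,1)$-part of $H^{n+1}(X)$, equals $L\,H^{n-1,0}_{\prim}(X)$, because only the $j=1$ term $L\,H^{n-1,0}_{\prim}$ carries Hodge type $(n,1)$. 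Combining these, $\lambda(H^{n-1,0})=L(H^{n-1,0}_{\prim})=H^{n,1}$, so $\lambda$ is an isomorphism and its cokernel vanishes.

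For the right-hand map, I would invoke Hard Lefschetz in the form $H^n_{\prim}(X)=\Ker\bigl(L\colon H^n(X)\to H^{n+2}(X)\bigr)$. Since $L$ is of Hodge type $(1,1)$ it respects the Hodge decomposition, and intersecting with the $(n-1,1)$-summand gives $\Ker\bigl(\lambda\colon H^{n-1,1}(X)\to H^{n,2}(X)\bigr)=H^{n-1,1}_{\prim}(X)$. Feeding these two computations back into the exact segment, exactness at $H^{n,1}(X)$ forces $\alpha=0$, so $\beta$ is injective with image $\Ker(\lambda)=H^{n-1,1}_{\prim}(X)$, and the induced map $H^1(U_X,\T_{U_X})_m\xrightarrow{\ \cong\ }H^{n-1,1}_{\prim}(X)$ is the desired natural isomorphism.

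I expect the only delicate point to be the bookkeeping in the Lefschetz decomposition that makes the left-hand $\lambda$ surjective — equivalently, the facts that $H^{n-1,0}$ is entirely primitive while $H^{n,1}=L\,H^{n-1,0}_{\prim}$ — since this is precisely what kills the would-be contribution of $H^{n,1}(X)$ to $H^1(U_X,\T_{U_X})_m$ and is what ultimately produces primitive, rather than full, cohomology on the right. Naturality is then automatic, all the maps involved (the connecting homomorphism, cup product with $c_1(\of_X(1))$, and the Hodge and primitive projections) being canonical.
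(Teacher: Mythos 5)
Your proposal is correct and follows essentially the same route as the paper: both extract from the long exact sequence of Corollary~\ref{cor.first} the short exact sequence $0\to \Cok(\lambda_{n-1,0})\to H^1(U_X,\T_{U_X})_m\to\Ker(\lambda_{n-1,1})\to 0$, kill the cokernel because $\lambda_{n-1,0}$ is an isomorphism by Hard Lefschetz, and identify the kernel with $H^{n-1,1}_{\prim}(X)$. The only difference is that you spell out the Lefschetz-decomposition bookkeeping (that $H^{n-1,0}$ is entirely primitive and $H^{n,1}=L\,H^{n-1,0}_{\prim}$) which the paper leaves implicit.
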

\begin{proof}
%
%We subdivide the proof in two cases: first we consider the (easier) case in which $h^{n-1,0}(X)=0$, then we remove this assumption and generalise the result.
%\begin{case}
%Assume now that $h^{n-1,0}=0$: thanks to this, the first term is zero; moreover, recalling the pairing \[\Omega^1_X \otimes \Omega^{n-1}_X \rightarrow \of_X(m) \] and using Serre Duality on $H^2(X, \of_X(m))$, the above sequence translates into \[ 0 \rightarrow (H^1({U_X}, \T_{U_X}))_{m} \stackrel{\varphi}{\rightarrow}H^{n-1,1}(X) \stackrel{\psi}{\rightarrow} H^{n,2}(X) \rightarrow \ldots \]
%But, since we proved before that the map $\psi$ can be identified with the Lefschetz operator, and since thanks to the vanishing the map $\varphi$ is injective, we can identify $(H^1({U_X}, \T_{U_X}))_{m}$ with its image under $\varphi$. But, since the previous sequence is exact, we have that $\im (\varphi)= \Ker(\psi)$, that is $(H^1({U_X}, \T_{U_X}))_{m}\cong H^{n-1,1}_{prim}$, and we have done.
%
%\end{case}
%\begin{case} Let us remove the assumption of the vanishing of $h^{n-1,0}(X)$. 
%Again, we are left with the sequence 
Consider the long exact sequence
\[\ldots \rightarrow H^{n-1,0}(X) \stackrel{\lambda_{n-1,0}}{\rightarrow} H^{n,1}(X) \rightarrow H^1({U_X}, \T_{U_X})_{m} \rightarrow H^{n-1,1}(X) \stackrel{\lambda_{n-1,1}}{\rightarrow} H^{n,2}(X) \rightarrow \ldots\]
from Corollary \ref{cor.first}. It induces the short exact sequence
\[
0{\rightarrow} \Cok(\lambda_{n-1,0}) \rightarrow H^1({U_X}, \T_{U_X})_{m} \rightarrow \Ker(\lambda_{n-1,1})\to 0.\]
Now notice that, by definition, $ \Ker(\lambda_{n-1,1})=H^{n-1,1}(X)_{\mathrm{prim}}$, while $\Cok(\lambda_{n-1,0})$ is zero since, $\lambda_{n-1,0}$ is an isomorphism by Hard Lefschetz.
%
%We will denote by $\lambda_{i,j}^k$ the $k$-th iteration of the Lefschetz operator applied to the subspace $H^{i,j}(X)$.\\
%We recall (as for example in \cite{carlson}) that the \emph{primitive cohomology} $H^{p,q}_{\prim}(X)$ is defined either as \[ \Ker(\lambda_{p,q}^{n-k+1}:H^{p,q}(X) \to H^{n-q+1,n-p+1}(X)), \ \textrm{where }p+q=k \]
%or equivalently \[ \Ker(\Lambda_{p,q}: H^{p,q}(X) \to H^{p-1,q-1}(X)), \]
%where $\Lambda$ is the adjoint of the Lefschetz operator, defined by the means of the Hodge star operator; in particular we have $\Ker(\Lambda_{p+1,q})=\Cok(\lambda_{p, q-1})$. Conversely, we define the \emph{coprimitive cohomology} as \[ H^{p,q}_{\coprim}(X):=\Ker(\lambda_{p,q}: H^{p,q}(X) \to H^{p+1,q+1}(X)) \] and it is immediate to see that primitive and coprimitive cohomology coincides only when $p+q=n$. Moreover, by an easy consequence of Hard Lefschetz, primitive subspaces are nonzero only for $p+q \leq n$, while on the converse coprimitive are nonzero only for $p+q \geq n$.\\
%By the Hard Lefschetz Theorem,  for $k=p+q <n$ the map \[\lambda^{n-k}: H^{p,q}(X) \to H^{n-q, n-p}(X) \] is an isomorphism, and so in particular
%\[ \lambda_{n-1,0}: H^{n-1,0}(X) \stackrel{\cong}{\rightarrow} H^{n,1}(X) ,\]  and this implies $H^1({U_X}, \T_{U_X})_{m} \cong\Ker(\lambda_{n-1,1}) \cong H^{n-1,1}_{\prim}(X)$. 
%\end{case}
\end{proof}

What we have to do now is connect the previous result to the $T^1_{A_X}$, the module of first-order deformations of the affine cone of $X$.

\begin{thm}\label{t2} Let $X$ be a smooth subcanonical projectively normal variety of dimension $n>1$, and let $m\in \mathbb{Z}$ be the integer such that $\omega_X \cong \of_X(m)$. If $H^1(X, \of_X(k))=0$ for every $k\in \mathbb{Z}$, then we have \[ (T^1_{A_X})_{m} \cong H^{n-1,1}_{\prim}(X) \]
\end{thm}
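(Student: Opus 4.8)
The plan is to promote Theorem \ref{t1} to a statement about $T^1_{A_X}$ by establishing a graded isomorphism $T^1_{A_X}\cong H^1(U_X,\Theta_{U_X})$ and then extracting the degree-$m$ part. Since $X$ is projectively normal I would write $\mathfrak a_X=\C[x_0,\dots,x_N]/I$ and view $A_X=\Spec(\mathfrak a_X)\subset\A^{N+1}$ as an affine variety with an isolated singularity at the vertex $0$, whose irrelevant maximal ideal I denote by $\mathfrak m$. Dualizing the conormal sequence $0\to\mathcal C\to\Omega^1_{\A^{N+1}}|_{A_X}\to\Omega^1_{A_X}\to 0$ (with $\mathcal C$ the conormal sheaf) and using the paper's definition $T^1_{A_X}=\Ext^1_{\of_{A_X}}(\Omega^1_{A_X},\of_{A_X})$ produces a four-term exact sequence of graded $\mathfrak a_X$-modules
\[
0\to\Theta_{A_X}\to\Theta_{\A^{N+1}}|_{A_X}\to\mathcal N\to T^1_{A_X}\to 0,\qquad \mathcal N:=\mathcal{H}om_{\of_{A_X}}(\mathcal C,\of_{A_X}),
\]
in which $\Theta_{\A^{N+1}}|_{A_X}$ is free and $T^1_{A_X}$ is $\mathfrak m$-torsion, being supported at the vertex because $U_X$ is smooth.

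The comparison is then carried out entirely through local cohomology at $\mathfrak m$. I would split the four-term sequence into $0\to\Theta_{A_X}\to\Theta_{\A^{N+1}}|_{A_X}\to\mathcal Q\to 0$ and $0\to\mathcal Q\to\mathcal N\to T^1_{A_X}\to 0$, and feed them into the long exact sequences of $H^\bullet_{\mathfrak m}$. Three vanishings drive the argument. First, projective normality makes $\mathfrak a_X$ a normal (hence $S_2$) domain, so $H^0_{\mathfrak m}(\of_{A_X})=H^1_{\mathfrak m}(\of_{A_X})=0$, and since $\Theta_{\A^{N+1}}|_{A_X}$ is free this gives $H^i_{\mathfrak m}(\Theta_{\A^{N+1}}|_{A_X})=0$ for $i\le 1$. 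Second, $\mathcal N$ is a $\mathcal{H}om$ into $\of_{A_X}$, hence reflexive over the normal domain $\mathfrak a_X$ and therefore $S_2$, so $H^0_{\mathfrak m}(\mathcal N)=H^1_{\mathfrak m}(\mathcal N)=0$ (the vertex has codimension $n+1\ge 3$). Third—and this is where the hypothesis enters—the graded identification $H^i_{\mathfrak m}(\of_{A_X})\cong\bigoplus_k H^{i-1}(X,\of_X(k))$ for $i\ge 2$ shows that $H^2_{\mathfrak m}(\of_{A_X})=0$, whence $H^2_{\mathfrak m}(\Theta_{\A^{N+1}}|_{A_X})=0$ as well. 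Chasing the second short exact sequence then yields $T^1_{A_X}\cong H^1_{\mathfrak m}(\mathcal Q)$ (the outer $H^\bullet_{\mathfrak m}(\mathcal N)$ terms vanish and $H^0_{\mathfrak m}(T^1_{A_X})=T^1_{A_X}$), while the first yields $H^1_{\mathfrak m}(\mathcal Q)\cong H^2_{\mathfrak m}(\Theta_{A_X})$ (the neighbouring $H^\bullet_{\mathfrak m}(\Theta_{\A^{N+1}}|_{A_X})$ terms vanish). Finally $A_X$ affine gives $H^2_{\mathfrak m}(\Theta_{A_X})\cong H^1(U_X,\Theta_{U_X})$, all isomorphisms being homogeneous for the $\C^*$-action.

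Stringing these together produces a graded isomorphism $T^1_{A_X}\cong H^1(U_X,\Theta_{U_X})$; restricting to degree $m$ and invoking Theorem \ref{t1} gives $(T^1_{A_X})_m\cong H^1(U_X,\Theta_{U_X})_m\cong H^{n-1,1}_{\prim}(X)$, as claimed, with $n>1$ ensuring that the vertex has codimension at least three so that the $S_2$/reflexivity depth estimates apply. I expect the main obstacle to be the local-cohomology bookkeeping of the middle step: one must carefully verify each depth and reflexivity vanishing and, above all, recognise that the single hypothesis $H^1(X,\of_X(k))=0$ for all $k$ is precisely what kills $H^2_{\mathfrak m}(\of_{A_X})$ and thereby forces both connecting maps to be isomorphisms. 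A secondary point to check is the exactness of the dualized conormal sequence under the paper's definition of $T^1_{A_X}$; reassuringly, the argument only uses that the middle term $\mathcal N$ is a reflexive sheaf, so it is insensitive to the precise conormal module one starts from.
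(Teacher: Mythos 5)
Your argument is correct and follows the same overall strategy as the paper's: both reduce the statement to Theorem \ref{t1} by producing a graded isomorphism $T^1_{A_X}\cong H^1(U_X,\Theta_{U_X})$ under the hypothesis $H^1(X,\of_X(k))=0$ for all $k$. The difference lies in how that isomorphism is obtained. The paper simply quotes Schlessinger's left-exact sequence $0\to T^1_{A_X}\to H^1(U_X,\Theta_{U_X})\to H^1(U_X,\Theta_{\C^{N+1}}|_{U_X})$ and notes that the right-hand term is a sum of twists of $\bigoplus_k H^1(X,\of_X(k))$, hence zero; you instead re-derive the comparison from first principles via the normal module and local cohomology at the vertex. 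Your three vanishings --- $\mathrm{depth}_{\mathfrak{m}}\of_{A_X}\geq 2$ from normality, $\mathrm{depth}_{\mathfrak{m}}\mathcal{N}\geq 2$ from reflexivity of a dual module over a normal domain, and $H^2_{\mathfrak{m}}(\of_{A_X})\cong\bigoplus_k H^1(X,\of_X(k))=0$ --- are exactly the ingredients hidden inside that citation, and the hypothesis enters at the same point in both proofs. What your route buys is self-containedness and a transparent accounting of where projective normality and $n>1$ are actually used; what it costs is the local-cohomology bookkeeping, plus one point that deserves an explicit line rather than the parenthetical you give it: identifying $\Ext^1_{\of_{A_X}}(\Omega^1_{A_X},\of_{A_X})$ with the cokernel of $\Theta_{\A^{N+1}}|_{A_X}\to\mathcal{N}$ requires that the kernel of $I/I^2\to\Omega^1_{\A^{N+1}}|_{A_X}$ be a torsion module (true here, since it is supported at the vertex and $\mathfrak{a}_X$ is a domain), so that it is invisible to $\mathrm{Hom}(-,\of_{A_X})$ and the two candidate normal modules coincide.
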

\begin{proof}
From \cite{schlessinger} we have that $T^1_{A_X}$ fits into the exact sequence 
 \[ 0 \rightarrow T^1_{A_X} \to H^1({U_X}, \T_{U_X}) \to H^1({U_X}, (\T_{\C^{N+1}})\bigr|_{U_X})\cong  H^1({U_X}, \of_{U_X})^{N+1}.
\]
Since $H^1({U_X}, \of_{U_X})\cong \bigoplus_{k}H^1(X, \of_X(k))$, we see that if $H^1(X, \of_X(k))=0$ for every $k\in \mathbb{Z}$, then $T^1_{A_X} \cong H^1({U_X}, \T_{U_X})$. The conclusion the follows from Theorem \ref{t1}.
\end{proof}
\begin{cor}Under the same hypothesis of the theorem above, we have that in general the degree $k$ component of the $T^1_{A_X}$ is given by \[  (T^1_{A_X})_{k} \cong \Ker(\lambda: H^1(X, \Omega^{n-1}(k-m)) \to H^2(X, \omega_X(k-m)) .\]
\end{cor}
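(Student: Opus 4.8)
The plan is to re-run the argument proving Theorem~\ref{t2} at an arbitrary degree $k$ rather than only at $k=m$, and then to rewrite the resulting kernel using the subcanonical identification of $\Theta_X$. The whole corollary is thus a degreewise re-reading of material already in place, followed by one twist of line bundles.

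First I would observe that the Schlessinger exact sequence invoked in the proof of Theorem~\ref{t2},
\[ 0 \to T^1_{A_X} \to H^1(U_X, \T_{U_X}) \to H^1(U_X, \of_{U_X})^{N+1}, \]
is a sequence of \emph{graded} modules, hence may be read off degree by degree. Since $H^1(U_X, \of_{U_X}) \cong \bigoplus_k H^1(X, \of_X(k))$ and we assume $H^1(X, \of_X(k)) = 0$ for every $k$, the rightmost term vanishes in each degree, so $(T^1_{A_X})_k \cong H^1(U_X, \T_{U_X})_k$ for all $k\in\Z$. Next I would feed degree $k$ of Lemma~\ref{lemma.first} into this. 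The relevant segment of the long exact sequence there is
\[ H^1(X, \of_X(k)) \xrightarrow{\lambda} H^1(U_X, \T_{U_X})_k \to H^1(X, \Theta_X(k)) \xrightarrow{\lambda} H^2(X, \of_X(k)), \]
and the vanishing of the leftmost term forces
\[ H^1(U_X, \T_{U_X})_k \cong \Ker\bigl(\lambda : H^1(X, \Theta_X(k)) \to H^2(X, \of_X(k))\bigr), \]
with $\lambda$ the cup product against $c_1(\of_X(1))$, exactly as identified in Lemma~\ref{lemma.first}.

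The final step is purely a matter of the subcanonical hypothesis. From the canonical isomorphism $\W^{n-1}\Omega^1_X \cong \Theta_X \otimes \omega_X$ and from $\omega_X \cong \of_X(m)$ I obtain $\Theta_X(k) \cong \Omega^{n-1}_X(k-m)$ and $\of_X(k) \cong \omega_X(k-m)$, both gotten by tensoring with the invertible sheaf $\omega_X^{-1} \cong \of_X(-m)$. Because $\lambda$ is the cup product with a fixed class in $H^1(X,\Omega^1_X)$, naturality of the cup product (equivalently, naturality of the connecting map of the twisted extension defining $\lambda$) guarantees that these isomorphisms intertwine the two copies of $\lambda$, so their kernels are identified. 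Combining the three displays yields
\[ (T^1_{A_X})_k \cong \Ker\bigl(\lambda : H^1(X, \Omega^{n-1}(k-m)) \to H^2(X, \omega_X(k-m))\bigr), \]
as claimed, and setting $k=m$ recovers Theorem~\ref{t2}.

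The only point requiring genuine care is this last compatibility of $\lambda$ with the twist by $\omega_X^{-1}$: one must check that tensoring the defining short exact sequence by the fixed line bundle $\of_X(-m)$ carries the connecting homomorphism to the connecting homomorphism, so that the two kernels match under the resulting isomorphisms. This is immediate from naturality and contains no real computation; every other ingredient is already established in Lemma~\ref{lemma.first} and in the proof of Theorem~\ref{t2}.
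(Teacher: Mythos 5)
Your proposal is correct and follows exactly the route the paper intends: the corollary is stated without proof precisely because it is the degree-$k$ re-reading of the argument for Theorem~\ref{t2} (graded Schlessinger sequence plus Lemma~\ref{lemma.first}), combined with the twist $\Theta_X(k)\cong\Omega^{n-1}_X(k-m)$, $\of_X(k)\cong\omega_X(k-m)$ coming from $\omega_X\cong\of_X(m)$. Your care about the compatibility of $\lambda$ with the line-bundle twist is well placed and handled correctly by naturality of cup product with the fixed class $c_1(\of_X(1))\in H^1(X,\Omega^1_X)$.
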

%It is also important to notice that any Cohen-Macaulay projective variety with $H^1(X, \of_X)=0$ satisfies our property with respect to some embedding: to see this, take a sufficiently ample line bundle $L$ such that $H^1(X,\omega_X \otimes L)=0$ and $H^1(X,L^n)=0$ for all $n\geq 1$. Thank to this, we are going to assume directly this condition for $X$.
%Also, later on in this paper, we are going to ask for maximal depth (that can be translated to $H^i(X, \of_X)=0$ for all $1<i<n$), but we stress that by now the result holds in this much weaker hypothesis.

%We are now in position to prove the main result of this section.

\begin{rmk} 
What kind of varieties satisfies the condition $H^1(X, \of_X(k))=0$ for every $k\in \mathbb{Z}$ that appears in Theorem \ref{t2} above? By Kodaira vanishing 
one sees that all smooth Fano manifolds and simply connected projective Calabi-Yau manifolds satisfy this condition. Also, every arithmetically Cohen-Macaulay projective variety (and so, in particular projective spaces and their products, projective complete intersections, Grassmann manifolds and Schubert subvarieties, flag manifolds and generalized flag manifolds) of dimension at least 2 satisfies it. Notice that, if  dim $X \geq 2$, the vanishing condition condition $H^1(X, \of_X(k))=0$ for every $k\in \mathbb{Z}$ is actually equivalent to the condition   $\mathrm{depth}_0A_X \geq 3$. Namely, we can identify $H^1({U_X}, \of_{U_X})$ with $H^2_{\mathfrak{m}}(A_X, \of_{A_X})$, the second local cohomology group of $A_X$ at the maximal (irrelevant) ideal, and the vanishing of this is by definition the same request as  $\mathrm{depth}_0A_X \geq 3$.

For dim $X\geq 2$ the simplest example of projective manifolds for which $H^1(X, \of_X(k))$ does not vanish for every $k$ given by Abelian varieties: for them, theorem \ref{t1} still holds, but the problem of determining the image of $T^1_A$ inside $H^1({U_X}, \T_{U_X})$ remains open.
\end{rmk}

%\begin{rmk}
%The depth condition we asked for automatically excludes every curve. Still, in the case of a complete intersection curve, we have $H^1({U_X}, \Theta_{\C^N}|_{U_X})=0$ (see \cite{depth}). This means we can write down an exact sequence as before
%\[ \ldots \rightarrow H^0(X, \Theta_X(m) ) \rightarrow H^1(X, \of_X(m)) \rightarrow (T^1_A)_{m} \rightarrow H^1(X, \Theta_X(m)) \rightarrow H^2(X, \of_X(m)) \rightarrow \ldots\]
%Now, since for a smooth curve $\Omega^1_X \cong \omega_X \cong \of_X(m)$, we can twist back to obtain \[ \of_X \cong (\Omega_X^1(-m) \cong (\Theta_X(m))^{\vee} \] which gives us 
%\[ \ldots \rightarrow H^0(X, \of_X) \stackrel{\gamma}{\longrightarrow} H^1(X, \omega_X) \stackrel{\beta}{\longrightarrow} (T^1_A)_{m} \stackrel{\delta}{\longrightarrow} H^1(X, \of_X) \rightarrow 0 \]
%Now, since the sequence is exact, we have that $\Ker \delta= \im \beta$; on the other hand, since $\gamma$ is non-degenerate beween 1-dimensional spaces, then it is an isomorphism. Thus dim($\Ker \delta)=0$, and this implies \[ (T^1_A)_{m} \cong H^1(X, \of_X), \]
%as previously guessed.
%\end{rmk}

\subsection{Obstructions and Automorphisms}
Now we look at the Obstruction Theory of the cone $A_X$. %; similarly to the first-order deformation module, we have that i
Infinitesimal obstructions to deformations of $A_X$ live inside
\[ T^2_{A_X}:=\Ext^2_{\of_{A_X}}(\Omega^1_{A_X}, \of_{A_X}). \] 
Let us stick to the case of depth$_0 A_X \geq 3$ and dim $X \geq 2$, so that $H^1(X, \of_X(k))=0$ for any $k$ as in the previous section. Following \cite{schlessinger}, we can identify $T^2_{A_X}$ with $H^1({U_X}, N_{U_X})$, where $N_{U_X}$ is the normal bundle of $U_X$ in $\mathbb{C}^{N+1}$.
From the defining exact sequence \[ 0 \rightarrow \Theta_{U_X} \rightarrow \Theta_{\C^{N+1}}|_{U_X} \rightarrow N_{U_X} \rightarrow 0\] 
for  $N_{U_X}$ we obtain the long exact sequence
\begin{equation} \label{eq}\ldots \rightarrow 0 \rightarrow H^1({U_X}, N_{U_X}) \rightarrow H^2({U_X}, \Theta_{U_X}) \rightarrow \left(\!\bigoplus_{k}H^2(X, \of_X(k))\!\right)^{N+1}  \rightarrow \ldots \end{equation}
in cohomology, so that if  $H^2(X, \of_X(k))=0$ for any $k$ we have an isomorphism \[T^2_{A_X}\cong H^2({U_X}, \Theta_{U_X}).\] 
Notice that the condition $H^i(X, \of_X(k))=0$ for any $k$ in $\mathbb{Z}$ and for $i=1,2$ is in particular satisfied by every arithmetically Cohen-Macaulay variety of dimension at least 3.
From Lemma \ref{lemma.first} we have the exact sequences
\[
\ldots \rightarrow 0 \rightarrow (T^2_{A_X})_{k} \rightarrow H^2(X, \Theta_X(k)) \stackrel{\lambda}{\rightarrow} H^3(X, \of_X(k)) \rightarrow \ldots\]
where the maps $\lambda$ are the Lefschetz operators.
%
%As before, we have the sequence \[ 0 \rightarrow \of_{U_X} \rightarrow \Theta_{U_X} \rightarrow \pi^* \Theta_X \rightarrow 0 \] where $\pi$ is the projection from the vertex. If we use the notation \[ H^k_*(X, \mathcal{F}) = \bigoplus_{k \in \Z} H^k(X, \mathcal{F}(k)) \] we have the usual sequence \[ \ldots H^2_*(X, \of_X) \rightarrow (T^2_A)_* \rightarrow H^2_*(X, \Theta_X) \rightarrow H^3_*(X, \of_X) \rightarrow \ldots \]
Let us restrict to the cases $k=0$ and $k=m$, where $m$ is the integer such that $\omega_X\cong\mathcal{O}(m)$. For $k=0$ we find the exact sequence
\[
\ldots \rightarrow 0 \rightarrow (T^2_{A_X})_{0} \rightarrow H^2(X, \Theta_X) \stackrel{\lambda}{\rightarrow} H^3(X, \of_X) \rightarrow \ldots\]
which identifies $(T^2_A)_0$ with a subspace of $H^2(X, \Theta_X)$, which is the space containing the obstruction to (non-immersed) deformations of $X$. If moreover 
$\lambda\colon H^2(X, \Theta_X)\to  H^3(X, \of_X)$ is the zero map (as is the case, e.g., if $H^3(X, \of_X)$ vanishes), then we have an isomorphism  $(T^2_A)_0\cong H^2(X, \Theta_X)$.
If instead we look at the $k=m$ case, then by Corollary \ref{cor.first} we have the long exact sequence
\[
\ldots \rightarrow H^{n-1,1}(X) \stackrel{\lambda}{\rightarrow} H^{n,2}(X) \rightarrow (T^2_A)_{m} \rightarrow H^{n-1,2}(X) \stackrel{\lambda}{\rightarrow} H^{n,3}(X) \rightarrow \ldots
\]
and so we get the following
\begin{thm} 
Let $X$ be a smooth subcanonical projectively normal variety of dimension $n$, and let $m\in \mathbb{Z}$ be the integer such that $\omega_X \cong \of_X(m)$. If $H^i(X, \of_X(k))=0$ for every $k\in \mathbb{Z}$, and for $i=1,2$ then we have a natural isomorphism 
\[ (T^2_A)_{m}\cong H^{n-2,1}_{\mathrm{\prim}}(X). \]
\end{thm}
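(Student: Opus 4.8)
The plan is to follow the strategy of the proof of Theorem~\ref{t1}: I would split the degree $m$ long exact sequence into a short exact sequence and then identify its two outer terms by means of Hard Lefschetz. Concretely, the long exact sequence obtained above for $k=m$,
\[\ldots \rightarrow H^{n-1,1}(X) \xrightarrow{\lambda_{n-1,1}} H^{n,2}(X) \rightarrow (T^2_A)_{m} \rightarrow H^{n-1,2}(X) \xrightarrow{\lambda_{n-1,2}} H^{n,3}(X) \rightarrow \ldots\]
(which rests on the isomorphism $T^2_{A_X}\cong H^2(U_X,\Theta_{U_X})$ guaranteed by the hypotheses $H^1(X,\of_X(k))=H^2(X,\of_X(k))=0$ together with Corollary~\ref{cor.first}) yields the short exact sequence
\[0\rightarrow \Cok(\lambda_{n-1,1})\rightarrow (T^2_A)_m \rightarrow \Ker(\lambda_{n-1,2})\rightarrow 0.\]
It then remains to compute the two ends.

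The essential difference from Theorem~\ref{t1} is that here the relevant cohomology groups no longer sit in the middle degree: $H^{n-1,1}(X)$ has total degree $n$, whereas $H^{n-1,2}(X)$ has total degree $n+1$, so it lies strictly above the middle and $\Ker(\lambda_{n-1,2})$ is not literally a primitive subspace. For the cokernel I would show that $\lambda_{n-1,1}=L\colon H^{n-1,1}(X)\to H^{n,2}(X)$ is onto: Hard Lefschetz provides an isomorphism $L^2\colon H^{n-2,0}(X)\xrightarrow{\sim} H^{n,2}(X)$, and since $L^2=L\circ L$ factors through $L\,H^{n-2,0}(X)\subseteq H^{n-1,1}(X)$, the operator $L$ is surjective in this bidegree, so $\Cok(\lambda_{n-1,1})=0$. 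To identify the kernel I would transport the computation down to the middle degree: writing $\phi\colon H^{n-1}(X)\xrightarrow{\sim}H^{n+1}(X)$ for the Lefschetz isomorphism $L$, one checks the factorization $L|_{H^{n+1}}=L^2|_{H^{n-1}}\circ \phi^{-1}$, whence $\Ker(\lambda_{n-1,2})$ is the $(n-1,2)$ Hodge component of $\phi\bigl(\Ker(L^2\colon H^{n-1}(X)\to H^{n+3}(X))\bigr)$. Since $n-(n-1)+1=2$, the very definition of primitive cohomology gives $\Ker(L^2\colon H^{n-1}(X)\to H^{n+3}(X))=H^{n-1}_{\prim}(X)$; as $\phi=L$ is injective and carries the bidegree $(n-2,1)$ into $(n-1,2)$, we obtain $\Ker(\lambda_{n-1,2})\cong H^{n-2,1}_{\prim}(X)$. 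Combining the two computations collapses the short exact sequence to the desired isomorphism $(T^2_A)_m\cong H^{n-2,1}_{\prim}(X)$.

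The step I expect to be the main obstacle is precisely the Hodge-theoretic bookkeeping of the kernel computation: one must make sure that the Lefschetz isomorphism used to descend from degree $n+1$ to degree $n-1$ respects the Hodge bigrading, and that the power of $L$ cutting out the primitive part matches the primitivity exponent $2$. Once these degree matches are verified, both the vanishing of $\Cok(\lambda_{n-1,1})$ and the identification $\Ker(\lambda_{n-1,2})\cong H^{n-2,1}_{\prim}(X)$ follow formally from Hard Lefschetz, and the theorem is proved.
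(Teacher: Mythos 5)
Your proposal is correct and follows essentially the same route as the paper: the same short exact sequence $0\to \Cok(\lambda_{n-1,1})\to (T^2_A)_m\to \Ker(\lambda_{n-1,2})\to 0$ extracted from the degree-$m$ long exact sequence (using $T^2_{A_X}\cong H^2(U_X,\Theta_{U_X})$ under the vanishing hypotheses), with $\Cok(\lambda_{n-1,1})=0$ by Hard Lefschetz and $\Ker(\lambda_{n-1,2})=\lambda_{n-2,1}\Ker(\lambda_{n-1,2}\lambda_{n-2,1})\cong H^{n-2,1}_{\prim}(X)$. Your version merely spells out the Lefschetz bookkeeping (the factorization $L^2=L\circ L$ and the descent from degree $n+1$ to $n-1$) that the paper leaves implicit.
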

\begin{proof}
By the above discussion, we have a natural short exact sequence
\[
0\rightarrow \Cok(\lambda_{n-1,1}) \to (T^2_A)_{m} \rightarrow \Ker(\lambda_{n-1,2})\to 0.
\]
By Hard Lefschetz, $\Cok(\lambda_{n-1,1})=0$, and 
\[
\Ker(\lambda_{n-1,2})= \lambda_{n-2,1}\Ker(\lambda_{n-1,2}\lambda_{n-2,1})\cong H^{n-2,1}_{\prim}(X).
\]
%In particular, since $n+2>n$ we have $\Cok(\lambda_{n-1,1})\cong H^{n,2}_{\prim}(X)=0$ and thus, using $\omega_X=\of_X(m)$, we get $(T^2_A)_{m} \cong \Ker(\lambda_{n-1,2}) =: H^{n-1,2}_{\coprim}(X)$.
\end{proof}
%
%\begin{rmk}If we remove the hypothesis of $H^{n-2,0}(X)=0$, we can work directly on the $H^2({U_X}, \T_{U_X})$, where the relation with the $T^2_A$ is given by formula \eqref{eq}. In particular we have the exact sequence 
%\[ \ldots H^{n-1,1}(X) \stackrel{\eta}{\to} H^2({U_X}, \T_{U_X})_{m} \stackrel{\varphi}{\to} H^{n-1,2}(X) \to H^{n,3} \to \ldots\]
%We also have \[H^{n-1,2}_{\prim}(X) \cong H^2({U_X}, \Theta_{U_X})_{m}/ \Ker \varphi \cong H^2({U_X}, \Theta_{U_X})_{m}/\im \eta \] and this implies \[ H^2({U_X}, \Theta_{U_X})_{m} \cong H^{n-1,2}_{\prim}(X) \oplus H^{n-1,1}_{\coprim}(X). \]
%As we are going to see, this is a special case of a more general phenomenon that we are going to explore later.
%\end{rmk}
%
A similar result holds for the $T^0_{A_X}$, the module that parametrize the infinitesimal automorphism of the affine cone $A_X$. If depth$_0A_X \geq 2$ (which is satisfied, e.g., if $X$ is normal), we have $T^0_{A_X} \cong H^0({U_X},  \Theta_{U_X})$ and so Corollary \ref{cor.first} gives
 the long exact sequence
\[
0 \rightarrow H^{n,0}(X) \rightarrow (T^0_A)_{m} \rightarrow H^{n-1,0}(X) \stackrel{\lambda_{n-1,0}}{\rightarrow} H^{n,1}(X) \rightarrow \ldots
\]
and so the short exact one
\[0 \rightarrow H^{n,0}(X) \rightarrow (T^0_A)_{m} \rightarrow \Cok(\lambda_{n-1,0}) \to 0.\]
By Hard Lefschetz, $\lambda_{n-1,0}$ is an isomorphism and so  $\Cok(\lambda_{n-1,0})=0$, while \[H^{n,0}(X)  = H^{n,0}_{\prim}(X).\]
Thus we have
\begin{thm} \label{T^0 class}
Let $X$ of dimension n be smooth, projective, with $\omega_X \cong \of_X(m)$. Then we have an isomorphism \[(T^0_A)_{m} \cong H^{n,0}_{\prim}(X). \]
\end{thm}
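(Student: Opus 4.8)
The plan is to run the same argument used for $T^2_{A_X}$, but now tracking the degree-zero end of the long exact sequence rather than its $H^2$ end. First I would pass from the algebraic object $T^0_{A_X}$ to a cohomology group on the punctured cone: since $X$ is normal, the depth condition $\mathrm{depth}_0 A_X \geq 2$ holds, and the results of \cite{schlessinger} then give the natural identification $T^0_{A_X} \cong H^0(U_X, \Theta_{U_X})$. This places the whole computation inside the framework of Corollary \ref{cor.first}.

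Next I would read off the beginning of the weight-$m$ long exact sequence supplied by Corollary \ref{cor.first}, namely
\[
0 \to H^{n,0}(X) \to H^0(U_X, \Theta_{U_X})_{m} \to H^{n-1,0}(X) \xrightarrow{\lambda_{n-1,0}} H^{n,1}(X) \to \cdots,
\]
where the identifications $H^0(X, \of_X(m)) \cong H^{n,0}(X)$ and $H^0(X, \Theta_X(m)) \cong H^{n-1,0}(X)$ are exactly those already used in Corollary \ref{cor.first}, and the leftmost zero records that the long exact sequence starts in degree zero. Exactness then produces the short exact sequence
\[
0 \to H^{n,0}(X) \to (T^0_A)_{m} \to \Ker(\lambda_{n-1,0}) \to 0.
\]
The key input is Hard Lefschetz: the operator $\lambda_{n-1,0}$ is the Lefschetz map $L \colon H^{n-1}(X) \to H^{n+1}(X)$ restricted to the $(n-1,0)$ Hodge component, and since $L = L^{n-(n-1)}$ is an isomorphism on $H^{n-1}(X)$, it is in particular injective on $H^{n-1,0}(X)$. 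Hence $\Ker(\lambda_{n-1,0}) = 0$ and the sequence collapses to $(T^0_A)_{m} \cong H^{n,0}(X)$.

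Finally I would identify $H^{n,0}(X)$ with its primitive part: the primitive subspace $H^{n,0}_{\prim}(X)$ is by definition the kernel of $\lambda \colon H^{n,0}(X) \to H^{n+1,1}(X)$, and the target $H^{n+1,1}(X) = H^1(X, \Omega^{n+1}_X)$ vanishes because $\Omega^{n+1}_X = 0$ on an $n$-dimensional variety; thus $H^{n,0}(X) = H^{n,0}_{\prim}(X)$, which yields the claim. I do not anticipate any serious obstacle: the argument is formally identical to the $T^1$ and $T^2$ cases once the degree-zero segment of the sequence is written out. The only points deserving a little care are the clean passage $T^0_{A_X} \cong H^0(U_X, \Theta_{U_X})$, which rests on the depth hypothesis rather than on any new geometric content, and the verification that the sequence genuinely begins with an injection of $H^{n,0}(X)$, so that no spurious term intervenes on the left.
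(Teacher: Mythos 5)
Your proposal is correct and follows essentially the same route as the paper: identify $(T^0_{A_X})_m$ with $H^0(U_X,\Theta_{U_X})_m$ via the depth condition, extract the initial segment of the weight-$m$ long exact sequence from Corollary \ref{cor.first}, and kill the third term by Hard Lefschetz. If anything, your version is slightly cleaner, since you correctly name the third term of the short exact sequence as $\Ker(\lambda_{n-1,0})$ (the paper writes $\Cok$, immaterial here because $\lambda_{n-1,0}$ is an isomorphism) and you justify $H^{n,0}(X)=H^{n,0}_{\prim}(X)$ via the vanishing of $\Omega^{n+1}_X$, which the paper merely asserts.
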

\subsection{A SINGULAR appendix: how to compute Hodge numbers using the $T^i$}
One of the many applications of our theorems on the $T^1_{A_X}$ and the $T^2_{A_X}$ is a concrete tool to compute part of the Hodge structure of a smooth projective variety. We recall from the previous section that, under appropriate hypothesis on depth at the vertex, we can identify \[ (T^1_{A_X})_m \cong H^{n-1,1}_{\prim}(X) \] \[ (T^2_{A_X})_m \cong H^{n-2,1}_{\prim}(X), \] where as usual $\omega_X \cong \of_X(m)$. The key is that both $T^1$ and $T^2$ are easily computable, especially using computer algebra languages such as SINGULAR (see \cite{singular}), already endowed with efficient built-in tools. Suppose we start from $X \subset \PP^N$ a smooth projective variety, with $X=V(I)$, where $I=(f_1, \ldots, f_m)$. Then the instruction \begin{verbatim}
module T_1= T_1(I);
module T_2=T_2(I);
hilb(T_1,2);
hilb(T_2,2);
\end{verbatim}
computes the dimension of both $T^1_{A_X}[-d]$ and $T^2_{A_X}[-d]$, where $d=$ max$\lbrace$deg$(f_i)\rbrace$ and then lists the dimensions of the various graded components. Let us pursue a couple of nontrivial example in detail:
\subsubsection{A (Gushel-Mukai) Fano Threefold of Degree 10 and Coindex 1} Consider the case of $X=Gr(2,5) \cap H_1 \cap H_2 \cap Q$ a threefold complete intersection in the Grassmannian of 2-planes in $\C^5$ given by two hyperplane sections and one quadric, considered for example in \cite{mukai} and  \cite{debarre2} for its connections with HyperK\"ahler geometry. Since the canonical class of the Grassmannian is $\omega_{Gr(2,5)} \cong \of_{Gr(2,5)}(-5)$, by adjunction $X$ is a Fano of index 1, that is $\omega_X= \of_X(-1)$. By Kodaira vanishing we have $h^{i,0}(X)=0$ for $i=1,2,3$, and by Lefschetz hyperplane theorem we have $H^{1,1}(X)= H^{2,2}(X) \cong \C$. Thus the only Hodge piece missing is $H^{2,1}(X)= H^{2,1}_{\prim}(X)$, and by our theorem we have \[ H^{2,1}_{\prim}(X) \cong (T^1_{A_X})_{-1}. \]
Let us produce a code in SINGULAR: \begin{verbatim}
ring r=97, (y_0, y_1, y_2, y_3, y_4, y_5, y_6, y_7, y_8, y_9), ds;
ideal I =y_2*y_4-y_1*y_5+y_0*y_7, y_3*y_4-y_1*y_6+y_0*y_8,
y_3*y_5-y_2*y_6+y_0*y_9, y_3*y_7-y_2*y_8+y_1*y_9,
 y_6*y_7-y_5*y_8+y_4*y_9, 
sparsepoly(1,1,0,10),sparsepoly(1,1,0,10),sparsepoly(2,2,0,10);
module T= T_1(I);
hilb(T,2);
\end{verbatim}
the first 5 equations in the ideal are nothing but the Pl\"ucker relations of $Gr(2,5)$ embedded in $\PP^9$, with three generic hyperplane sections and one quadratic equation (actually SINGULAR can check if those four equations form a regular sequence). The dimension of the graded component we get in return is \begin{verbatim}
1,10,22,11,1
\end{verbatim}
where we have to look at the component of degree 1, since $ (T^1_{A_X})_{-1}=(T^1_{A_X})[-2]_1$, and this is 10. If needed, we can ask for an explicit monomial basis for $H^{2,1}(X)$, using the command \begin{verbatim}
kbase(T,1);
\end{verbatim}
Note that $(T^1_{A_X})_0$ is 22-dimensional: this agrees (and the same for the Hodge numbers) with the computation considered in \cite{debarre2}.  In particular its lower Hodge diamond is 
\[\begin{matrix}

0 & & 10& & 10& & 0 \\
&0 & & 1  & &0 & \\
&&0& &0 & &\\
& & &1& && \\
\end{matrix}\]
\subsubsection{A Pfaffian-Calabi Yau Threefold}
Consider now the case of a Pfaffian-Calabi Yau Threefold, as in the work of \cite{batyrev} and \cite{kanazawa}. We define \[P= G \cap Q_1 \cap Q_2 \cap H,\] where $G=Gr(2,5)$ as before, $Q_1$ and $Q_2$ are quadrics, $H$ is an hyperplane. By a computation analogous to the previous example it is easy to see that $P$ is a Calabi-Yau threefold: the dimension of the graded component of the $T^1$ of its affine cone are
\begin{verbatim}
2,19,61,101,82, 29, 3
\end{verbatim} 
In particular we find the lower Hodge diamond
\[\begin{matrix}

1 & & 61& & 61& & 1 \\
&0 & & 1  & &0 & \\
&&0& &0 & &\\
& & &1& && \\
\end{matrix}\]
\subsubsection{A weighted example}
As said before, the method we implemented works well also in the weighted projective space case. As an example, we look at the online database \cite{grdb} where several thousands of families of quasi-smooth Fano threefolds are listed. In SINGULAR, we can deal with weighted projective space by specifying a weighted order on the monomial, namely using the command \begin{verbatim}
wp(a_0,...,a_n)
\end{verbatim}
where the $a_i$ are the chosen weights. As an example, we pick $X_{6,7} \subset \PP(1,1,2,2,3,5)$, corresponding to the entry 5839 in the database  \cite{grdb}. This is a codimension 2 Fano threefold of index 1, degree $7/10$ and with a $3 \times \frac{1}{2} (1,1,1), \frac{1}{5}(1,2,3)$ as Basket. Computing the $T^1$ in the same exact way as before we get $(T^1_A)_{-1}=39$, and we can then draw the lower Hodge diamond as
\[\begin{matrix}

0 & & 39& & 39& & 0 \\
&0 & & 1  & &0 & \\
&&0& &0 & &\\
& & &1& && \\
\end{matrix}\]

\section{Deformations of Derived Categories and Hodge Theory}
\subsection{A Primer on Noncommutative Schemes and Hochschild Structures} 
Remark \ref{derived} suggests that one natural way to generalise the notion of deformation of a $\mathbb C$-scheme $Y$ is by considering derived deformations. Namely, derived deformations of $Y$ are already encoded in the cotangent complex. On the other hand another interesting generalisation consists in deforming $Y$ as a noncommutative schemes -- whatever these structures could be. \\
The theory of noncommutative schemes, usually known as Noncommutative Algebraic Geometry, is very much a developing subject, whose fundamentals have not been completely settled yet; however the basic idea -- which dates back to Grothendieck and has recently gone through a rapid development -- consists of observing that the geometry of $Y$ does not really depend on the scheme as a space, but rather on the derived category $D(\mathfrak{Coh}\left(Y\right))$ 
%\footnote{Sometimes we may need to work with $D(\mathfrak{QCoh}\left(Y\right))$ rather than $D(\mathfrak{Coh}\left(Y\right))$.} 
or better on its dg-enhancements or, more generally $A_\infty$-enanchments. Therefore, a (non-necessarily commutative) scheme over $\mathbb C$ can be thought as the datum of an $A_\infty$-category over $\mathbb C$: those which are quasi-equivalent to a dg-category of coherent sheaves over a classical scheme will encode the usual commutative schemes, whereas the others will be called noncommutative schemes. In particular, a  $A_\infty$- deformation of the commutative scheme $Y$ will be a deformation of the dg-category category $D(\mathfrak{Coh}\left(Y\right))$ as an $A_\infty$-category \cite{Ainf}. \\
Exactly as the infinitesimal deformation theory of an associative algebra (or more generally an $A_\infty$-algebra) is governed by its Hochschild cohomology, so happens for the infinitesimal deformation theory of $A_\infty$-categories. In particular, the $A_\infty$-deformations of a commutative $\mathbb{C}$-scheme $Y$ are governed by
\[
\HH^\bullet\left(Y\right):=\HH^\bullet\left(D\mathfrak{Coh}(Y)\right)\cong\mathrm{Ext}^\bullet_{Y\times Y}\left(\of_{\Delta},\of_{\Delta}\right)
\]
where $\of_{\Delta}$ stands for the structure sheaf of the diagonal in $Y\times Y$, see \cite{caldararu}. \\
There are some even more concrete interpretations of Hochschild cohomology: as a matter of fact if $Y$ is a smooth quasiprojective variety the Hochschild–-Kostant–-Rosenberg Theorem (see \cite{caldararu}) establishes an isomorphism between $\HH^{\bullet}(Y)$ and the (cohomology) algebra of polyvector fields on $Y$, namely 
\[ 
\HH^{\bullet}(Y) \cong \bigoplus_{p,q} H^q(Y, \W^p \Theta_Y). 
\]
Notice that the algebra of polyvector fields $\bigoplus_{p,q} H^q(Y, \W^p \Theta_Y)$ is known to be the tangent space at $Y$ to the \emph{extended moduli supermanifold of complex structures} (see \cite{barannikov}) and is also related to the derived moduli of non-commutative polarized schemes recently studied by Behrend and Noohi \cite{behrend}. %: this means that Hochschild cohomology has a deep modular meaning.
Finally, if $Y$ is a projective Calabi-Yau,  Serre duality gives a canonical isomorphism 
\[ 
\HH^{\bullet}(Y) \cong \HH_{\bullet}(Y)[n] 
\] 
where the Hochschild homology of $Y$ is identified the ``vertical slices'' of the Hodge diamond of $Y$:
\[
\HH_{\bullet}(Y)=\bigoplus_k \HH_k(Y)=\bigoplus_k\left( \bigoplus_{p-q=k}H^{p,q}(Y)\right).
\] 
\subsection{Hochschild Cohomology of Punctured Affine Cones}\label{sec.hochschild}
In the context of Hochschild structures, the results in Section 2.2 and Section 2.3 have beautiful generalizations. 
%; from now on let us stick to assumptions and notations of Section 2.2, that is let $X$ be a smooth complex projective variety for which $\omega_X \cong \of_X(m)$ ($m \in \Z$), $A$ be either its affine cone or the graded algebra for which $X\cong\mathrm{Proj}(A)$ and $U_X:=A-v$ be the punctured cone. Notice that $U_X$ is a quasiprojective variety and consider its Hochschild cohomology $\HH^\bullet({U_X})$: this, via the $\mathbb G_m$-action induced by the grading of $A$, has a natural $\Z$-grading. We define 
%\[
%H^{p,q}_{\coprim}(X)\colon=\Cok(L_{p,q})
%\] 
%with $L_{p,q}$ denoting the Lefschetz operator applied to $H^{p,q}(X)$; in other words we have 
%\[
%H^{p,q}_{\coprim}(X)\colon= \Ker (\Lambda_{p,q} \colon H^{p,q}(X) \to H^{p-1,q-1}(X))
%\] 
%where $\Lambda_{p,q}$ is the formal adjoint to the Lefschetz operator with respect to the Hodge form.\\
We start proving the following straightforward generalization of Lemma \ref{lemma.first}.
\begin{lem}\label{lemma.wedge}
For every $k\in \mathbb{Z}$,and for every $p\geq 0$, the relative tangent sheaf exact sequence
\[
0 \rightarrow \Theta_{U_X/X} \rightarrow \Theta_{U_X} \stackrel{\dd\pi}{\rightarrow} \pi^*(\Theta_X) \rightarrow 0
\]
induces a long exact sequence 
\[ \ldots  \rightarrow H^q(X, \W^{p-1}\!\!\T_X(k)) \rightarrow H^q({U_X}, \W^{p} \!\T_{U_X})_k \rightarrow H^q(X, \W^{p}\!\T_X(k)) \xrightarrow{\lambda} H^{q+1}(X, \W^{p-1} \!\!\T_X(k)) \rightarrow \ldots \]
where the maps $\lambda$ are the contractions with the hyperplane class in $H^1(X,\Omega^1_X)$, and where for $p=0$ one is setting $\W^{-1}\T_X=0$.
\end{lem}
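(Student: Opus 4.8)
The plan is to run the argument of Lemma \ref{lemma.first} with the Euler sequence replaced by its $p$-th exterior power. The starting point is that the Euler vector field trivialization $\Theta_{U_X/X}\cong\of_{U_X}$ turns the relative tangent sequence into the short exact sequence
\[
0 \rightarrow \of_{U_X} \rightarrow \Theta_{U_X} \rightarrow \pi^*\Theta_X \rightarrow 0,
\]
whose sub-bundle is a line bundle. For any short exact sequence $0\to L\to E\to F\to 0$ with $L$ a line bundle, the exterior-power filtration of $\W^pE$ has only two nonzero graded pieces, because $\W^iL=0$ for $i\geq 2$. Applying this here, and using that pullback commutes with exterior powers together with $\of_{U_X}\otimes\pi^*\W^{p-1}\Theta_X\cong\pi^*\W^{p-1}\Theta_X$, the first step is to extract the short exact sequence
\[
0 \rightarrow \pi^*\W^{p-1}\Theta_X \rightarrow \W^p\Theta_{U_X} \rightarrow \pi^*\W^p\Theta_X \rightarrow 0.
\]
For $p=0$ this degenerates, under the convention $\W^{-1}\Theta_X=0$, to the identity $\of_{U_X}\cong\pi^*\of_X$, and for $p=1$ it is exactly the sequence used in Lemma \ref{lemma.first}.

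Next I would pass to cohomology and separate weights. Since $\pi\colon U_X\to X$ is an affine morphism, $H^q(U_X,\pi^*\F)=H^q(X,\pi_*\pi^*\F)$ for any quasicoherent $\F$, and the projection formula with $\pi_*\of_{U_X}\cong\bigoplus_k\of_X(k)$ gives $\pi_*\pi^*\F\cong\bigoplus_k\F(k)$. Taking the weight-$k$ component under the $\C^*$-action therefore identifies $H^q(U_X,\pi^*\W^p\Theta_X)_k\cong H^q(X,\W^p\Theta_X(k))$, and likewise for $\W^{p-1}$. As the displayed short exact sequence is $\C^*$-equivariant, its long exact cohomology sequence decomposes into weight components, and the weight-$k$ piece is precisely the long exact sequence asserted in the statement, with middle term $H^q(U_X,\W^p\Theta_{U_X})_k$.

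The hard part will be identifying the connecting homomorphism with the contraction $\lambda$ by the hyperplane class. This map is cup product with the extension class of the exterior-power sequence, an element of $\Ext^1_X(\W^p\Theta_X,\W^{p-1}\Theta_X)$. The key claim is that this class is induced functorially from the extension class $\Lambda\in H^1(X,\Omega^1_X)$ of the Euler sequence — already identified in Lemma \ref{lemma.first} with $c_1(\of_X(1))$ — through the interior-product pairing $\Omega^1_X\otimes\W^p\Theta_X\to\W^{p-1}\Theta_X$. Concretely, I would factor the connecting map as cup with $\Lambda$ followed by contraction, so that it is exactly $\iota_\Lambda$. Making this precise is a Koszul-type computation tracking how the extension class of $0\to L\to E\to F\to 0$ propagates to the induced sequences on exterior powers; the derivation property of the interior product is what ensures that the single class $\Lambda$ controls all the connecting maps at once. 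Because $\Lambda$ has weight zero and the contraction pairing preserves weights, the induced class is again weight zero, so the long exact sequence genuinely stays within a fixed weight $k$. Once this identification is established the lemma follows, with the $p=1$ case recovering Lemma \ref{lemma.first} and the $p=0$ case reducing to the trivial identification noted above.
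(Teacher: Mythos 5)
Your proposal is correct and follows essentially the same route as the paper: both extract the short exact sequence $0 \to \pi^*\W^{p-1}\Theta_X \to \W^p\Theta_{U_X} \to \pi^*\W^p\Theta_X \to 0$ from the fact that the subobject of the Euler sequence is a line bundle (so the exterior-power filtration has only two graded pieces), and both identify the connecting homomorphism by extending the degree-one connecting map of Lemma \ref{lemma.first} as a derivation -- the paper phrases this via the square-zero extension of exterior algebras, you via the derivation property of the interior product, which is the same mechanism. Your explicit weight-separation step via the projection formula is a harmless elaboration of what the paper leaves implicit.
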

\begin{proof}
The differential $\dd\pi\colon \Theta_{U_X}\to \pi^*(\Theta_X)$ induces a short exact sequence of sheaves of $\of_{U_X}$-algebras
\begin{equation}\label{eq.algebras}
0\to\ker(\dd\pi)\to \W^\bullet  \Theta_{U_X}\xrightarrow{\dd\pi}\pi^*\W^\bullet \Theta_X\to 0,
\end{equation}
which in every homogeneous degree $p$ reads
\[ 0 \rightarrow  \pi^* \bigwedge^{p-1}\Theta_X \rightarrow \W^p \Theta_{U_X} \rightarrow \pi^* \W^p \Theta_X \rightarrow 0 \]
since the leftmost term in the relative tangent sheaf exact sequence is a trivial line bundle (see, e.g., \cite[Theorem 4.1.3]{hirzebruch}).
%Recall the sequence \[ 0 \to \of_{U_X} \to  \Theta_{U_X} \stackrel{\dd \pi}{\to} \pi^* \T_X \to 0 \]
%where the map $\dd\pi$ is the tangent map to the projection. Up to a choice of representative, we can slightly abuse the notation, and call $\pi^* v_i=v_i$, if $\lbrace v_i \rbrace_{i=1}^n$ is a local basis of sections for $\Theta_X$. Then, if we take the completion of the basis for $\pi^*\Theta_X$ -- written as $v_1, \ldots, v_n, v_{n+1}$ -- as basis for $\Theta_{U_X}$, we can write $\dd\pi (v_i)=v_i$, for $i=1, \ldots, n$, $\dd \pi (v_{n+1})=0$.\\
%Now we take  as the map betweeen  \[\W^p \Theta_{U_X} \stackrel{\W ^p\dd \pi}{\rightarrow }\pi^* \W^p \Theta_X \to 0 \] the p-exterior power of $\dd \pi$, and this is surjective. By linearity, $\W^p \dd \pi (v_{i_1} \wedge \ldots \wedge v_{i_p})=0$ if and only if $i_j=n+1$ for some index. Therefore the kernel of $\W ^p\dd \pi$ can be identified with the subspace generated by $v_{n+1} \wedge v_I$, where $I$ is any multi-index of length $p-1$ missing $n+1$, and this is isomorphic to $\pi^*\W^{p-1}\T_X$ (with the first map being the inclusion). 
%In order to simplify index notations, we will use $n-p, n-p-1$.\\
Since (\ref{eq.algebras}) is a square zero extension, the connecting homomorphisms in the long exact sequence
\[ \ldots  \rightarrow H^q(U_X, \pi^*\W^{\bullet-1}\Theta_X) \rightarrow  H^q({U_X}, \W^{\bullet} \!\T_{U_X}) \rightarrow  H^q(U_X, \pi^*\W^{\bullet}\!\T_X) \xrightarrow{\lambda}   H^{q+1}(U_X,  \pi^*\W^{\bullet-1}\Theta_X) \rightarrow \ldots \]
are given by the connecting homomorphism for the degree 1 sequence
\[ \ldots  \rightarrow H^q(U_X, \of_{U_X}) \rightarrow  H^q({U_X}, \T_{U_X}) \rightarrow  H^q(U_X, \pi^*\T_X) \xrightarrow{\lambda}   H^{q+1}(U_X,  \of_{U_X}) \rightarrow \ldots \]
extended as a (graded) derivation. By Lemma \ref{lemma.first} we know that this is given by the contraction with the hyperplane class seen as a degree zero element in $H^1(U_X,\pi^*\Omega^1_X)$.  In particular, $\lambda$ will be degree preserving, and so we get, for every $p$ and every $k$ the long exact sequence
\[ \ldots  \rightarrow H^q(X, \W^{p-1}\!\!\T_X(k)) \rightarrow  H^q({U_X}, \W^{p} \!\T_{U_X})_k \rightarrow   H^q(X, \W^{p}\!\T_X(k)) \xrightarrow{\lambda}  H^{q+1}(X, \W^{p-1} \!\!\T_X(k)) \rightarrow \ldots, \]
where $\lambda$ is the contraction with the hyperplane class in $H^1(X,\Omega^1_X)$.
\end{proof}
Assuming $\omega_X \cong \of_X(m)$, the nondegenerate pairings $\Omega_X^i \otimes \Omega_X^{n-i} \rightarrow \omega_X$ indice isomorphisms $\W^{i}\T_X (m) \cong \Omega_X^{n-i}$. Under these isomorphisms, the contraction morphisms
\[
H^q(X, \W^{p}\!\T_X(m)) \xrightarrow{\lambda} H^{q+1}(X, \W^{p-1} \!\!\T_X(m))
\]
become the Lefschetz maps
\[
H^{n-p,q}(X) \xrightarrow{\lambda} H^{n-p+1,q+1}(X).
\]
Therefor we obtain the following.
\begin{cor}
Let $X$ be a smooth subcanonical projectively normal variety of dimension $n$, and let $m\in \mathbb{Z}$ be the integer such that $\omega_X \cong \of_X(m)$. Then we have a long exact sequence
%\begin{small}
%\[ \ldots \rightarrow H^q({U_X}, \W^{p} \T_{U_X})_{m} \rightarrow H^q(X, \W^{p}\T_X(m)) \rightarrow H^{q+1}(X, \W^{p-1} \T_X(m)) \rightarrow \ldots \]
%\end{small}
%
%%  \rightarrow H^q(X, \W^{n-p-1}\T_X(m)) \rightarrow H^{q-1}(X, \W^{n-p} \T_X(m))
%
%
%Thus we have 

%
\begin{equation}\label{les}
\cdots\to H^{n-p,q-1}(X) \stackrel{ \lambda_{n-p,q-1}}{\rightarrow} \to H^{n-p+1,q}(X) \rightarrow  H^q({U_X}, \W^{p}\T_{U_X})_{m} {\rightarrow}H^{n-p,q}(X) \stackrel{\lambda_{n-p,q}}{\rightarrow}H^{n-p+1,q+1}(X) \rightarrow \ldots 
\end{equation}

where $\lambda_{i,j}\colon H^{i,j}(X)\to H^{i+1,j+1}(X)$ is the Lefschetz operator.
\end{cor}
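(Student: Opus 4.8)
The plan is to derive the stated long exact sequence \eqref{les} directly from Lemma~\ref{lemma.wedge} by specializing to the degree $k=m$ and then rewriting every $X$-term through the duality isomorphisms recorded in the paragraph immediately above. Since Lemma~\ref{lemma.wedge} already furnishes the long exact sequence with the connecting maps identified as contractions with the hyperplane class, the corollary is essentially a bookkeeping exercise in bidegrees once $k=m$ is fixed.

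First I would write down the long exact sequence of Lemma~\ref{lemma.wedge} at $k=m$, keeping the segment that precedes the displayed portion, so that it reads
\[ \cdots \to H^{q-1}(X, \W^{p}\T_X(m)) \xrightarrow{\lambda} H^q(X, \W^{p-1}\T_X(m)) \to H^q({U_X}, \W^{p}\T_{U_X})_m \to H^q(X, \W^{p}\T_X(m)) \xrightarrow{\lambda} H^{q+1}(X, \W^{p-1}\T_X(m)) \to \cdots. \]
Then I would feed in the isomorphisms $\W^{i}\T_X(m)\cong \Omega^{n-i}_X$, which come from the perfect pairing $\Omega^i_X\otimes\Omega^{n-i}_X\to\omega_X\cong\of_X(m)$ (equivalently $\W^i\T_X\otimes\omega_X\cong\Omega^{n-i}_X$). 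For the relevant sheaves this gives $H^q(X,\W^{p}\T_X(m))\cong H^{n-p,q}(X)$ and $H^q(X,\W^{p-1}\T_X(m))\cong H^{n-p+1,q}(X)$, together with $H^{q-1}(X,\W^{p}\T_X(m))\cong H^{n-p,q-1}(X)$ and $H^{q+1}(X,\W^{p-1}\T_X(m))\cong H^{n-p+1,q+1}(X)$. Substituting these identifications leaves the central groups $H^q({U_X},\W^{p}\T_{U_X})_m$ unchanged and turns the sequence termwise into exactly \eqref{les}.

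The one substantive point I would need to confirm is that, under these duality isomorphisms, the connecting maps $\lambda$ of Lemma~\ref{lemma.wedge} — contraction with the hyperplane class in $H^1(X,\Omega^1_X)$ — go over into the Lefschetz operators $\lambda_{i,j}\colon H^{i,j}(X)\to H^{i+1,j+1}(X)$. Concretely, contraction $\W^{p}\T_X\otimes\Omega^1_X\to\W^{p-1}\T_X$ with the hyperplane class sends $H^q(X,\W^{p}\T_X)\to H^{q+1}(X,\W^{p-1}\T_X)$, and tensoring by $\omega_X\cong\of_X(m)$ converts this into cup product by the Kähler class $H^{n-p,q}(X)\to H^{n-p+1,q+1}(X)$. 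This is precisely the compatibility asserted in the paragraph preceding the corollary, so I would simply invoke it rather than reprove it.

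I do not expect a genuine obstacle here: the only real content has already been isolated in Lemma~\ref{lemma.wedge} and in the duality statement just above, and the proof amounts to applying $\W^i\T_X(m)\cong\Omega^{n-i}_X$ term by term while tracking the shift of bidegrees. The mild care required is purely clerical — ensuring the indices $n-p,\,n-p+1$ and $q-1,\,q,\,q+1$ land in the right slots so that the contraction maps read off as $\lambda_{n-p,q-1}$ and $\lambda_{n-p,q}$ — after which the exactness of \eqref{les} is inherited verbatim from that of the sequence in Lemma~\ref{lemma.wedge}.
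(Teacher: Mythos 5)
Your proposal is correct and is essentially identical to the paper's argument: the corollary is obtained by specializing Lemma~\ref{lemma.wedge} to $k=m$, applying the duality isomorphisms $\W^{i}\T_X(m)\cong\Omega_X^{n-i}$ term by term, and noting that under these identifications the contraction with the hyperplane class becomes the Lefschetz operator $\lambda_{i,j}$. Your index bookkeeping ($H^q(X,\W^{p}\T_X(m))\cong H^{n-p,q}(X)$, $H^q(X,\W^{p-1}\T_X(m))\cong H^{n-p+1,q}(X)$, etc.) matches the paper exactly.
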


We can then prove the following result, expressing the Hochschild cohomology of the puntured cone $U_X$ in terms of the primitive cohomology of $X$.
\begin{thm} \label{Hoch decomposition}
In the above assumptions we have a canonical isomorphism
\[\HH^{p,q}(U_X)_m \cong H^{n-p+1,q}_{\mathrm{prim}}(X)\oplus H^{n-q,p}_{\mathrm{prim}}(X),
\]
where, for each value of $p,q$, at most one of the two summands on the right is nonzero. In particular,
\[\HH^{p,q}(U_X)_m \cong 
\begin{cases}
H^{n-p+1,q}_{\mathrm{prim}}(X) \textrm{ if } p >q \\ \\
H^{n-q,p}_{\mathrm{prim}}(X) \quad \textrm{ if } p\leq q.
\end{cases}
\]

\end{thm}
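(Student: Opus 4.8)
The plan is to read the answer directly off the long exact sequence \eqref{les} of the preceding corollary, combined with the Hochschild--Kostant--Rosenberg identification $\HH^{p,q}(U_X)\cong H^q(U_X,\W^p\T_{U_X})$ from the Primer, passed to the weight-$m$ component for the $\C^*$-action. First I would break the long exact sequence
\[
\cdots\to H^{n-p,q-1}(X)\xrightarrow{\lambda_{n-p,q-1}}H^{n-p+1,q}(X)\to \HH^{p,q}(U_X)_m\to H^{n-p,q}(X)\xrightarrow{\lambda_{n-p,q}}H^{n-p+1,q+1}(X)\to\cdots
\]
into the short exact sequence
\[
0\to \Cok(\lambda_{n-p,q-1})\to \HH^{p,q}(U_X)_m\to \Ker(\lambda_{n-p,q})\to 0,
\]
exactly as in the proof of Theorem \ref{t1}, and then identify the two outer terms with primitive cohomology groups via Hard Lefschetz.

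The core of the argument is the behaviour of a single Lefschetz operator $\lambda\colon H^k(X)\to H^{k+2}(X)$. Since Hard Lefschetz makes $\lambda^{n-k}\colon H^k(X)\to H^{2n-k}(X)$ an isomorphism for $k\le n$, I would deduce that $\lambda$ is injective for $k\le n-1$ and surjective for $k\ge n-1$; feeding this into the Lefschetz decomposition $H^k(X)=\bigoplus_{r}\lambda^r H^{k-2r}_{\prim}(X)$ yields the two clean facts I need: $\Cok(\lambda\colon H^{k}\to H^{k+2})\cong H^{k+2}_{\prim}(X)$ whenever $k+2\le n$, and $\Ker(\lambda\colon H^{k}\to H^{k+2})\cong H^{2n-k}_{\prim}(X)$ whenever $k\ge n$. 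Applying the first to $\lambda_{n-p,q-1}$, whose target has total degree $n-p+q+1\le n$ precisely when $p>q$, gives $\Cok(\lambda_{n-p,q-1})\cong H^{n-p+1,q}_{\prim}(X)$; applying the second to $\lambda_{n-p,q}$, whose source has total degree $n-p+q\ge n$ precisely when $p\le q$, gives $\Ker(\lambda_{n-p,q})\cong H^{n-q,p}_{\prim}(X)$.

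The step I expect to be the main obstacle is keeping track of the full Hodge bidegrees, and not merely the total cohomological degrees, through the Lefschetz decomposition: I must verify that the primitive class computing $\Ker(\lambda_{n-p,q})$ genuinely lives in bidegree $(n-q,p)$, and that the one computing $\Cok(\lambda_{n-p,q-1})$ genuinely lives in bidegree $(n-p+1,q)$. Here I would use that $\lambda$ is cup product with the Kähler class in $H^1(X,\Omega^1_X)$, so it shifts bidegree by $(1,1)$ and respects the Hodge decomposition. Tracking this, the kernel in the case $p\le q$ is exactly $\lambda^{q-p}H^{n-q,p}_{\prim}(X)$, since $\lambda^{q-p}$ carries $H^{n-q,p}_{\prim}(X)$ isomorphically into $H^{n-p,q}(X)$ and is annihilated by one further application of $\lambda$; dually, in the case $p>q$ the cokernel is precisely the $r=0$ primitive summand $H^{n-p+1,q}_{\prim}(X)$ of the target.

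Finally I would assemble the two cases. For $p>q$ the kernel term vanishes, as $\lambda_{n-p,q}$ is injective on a source of total degree $\le n-1$, so $\HH^{p,q}(U_X)_m\cong H^{n-p+1,q}_{\prim}(X)$; for $p\le q$ the cokernel term vanishes, as $\lambda_{n-p,q-1}$ is surjective, so $\HH^{p,q}(U_X)_m\cong H^{n-q,p}_{\prim}(X)$. Since primitive cohomology vanishes in total degree exceeding $n$, one checks $H^{n-q,p}_{\prim}(X)=0$ when $p>q$ and $H^{n-p+1,q}_{\prim}(X)=0$ when $p\le q$; hence at most one summand ever survives, and the two descriptions combine into the single formula $\HH^{p,q}(U_X)_m\cong H^{n-p+1,q}_{\prim}(X)\oplus H^{n-q,p}_{\prim}(X)$, together with the stated case-by-case form.
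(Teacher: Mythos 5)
Your proposal is correct and follows essentially the same route as the paper's proof: both pass from the long exact sequence \eqref{les} to the short exact sequence $0\to\Cok(\lambda_{n-p,q-1})\to \HH^{p,q}(U_X)_m\to\Ker(\lambda_{n-p,q})\to 0$ and then identify the outer terms via Hard Lefschetz, splitting into the cases $p>q$ and $p\le q$. The only difference is that you spell out the Lefschetz-decomposition bookkeeping (identifying the kernel as $\lambda^{q-p}H^{n-q,p}_{\mathrm{prim}}(X)$ and the cokernel as the $r=0$ primitive summand) which the paper states without elaboration.
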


\begin{proof}
The long exact sequence \ref{les} induces the  short ones
\[ 0 \to \Cok(\lambda_{n-p,q-1}) \to H^q(U, \W^{p} \T_U)_m \to \Ker(\lambda_{n-p,q})\to 0. \]

If $p\leq q$, then $\Cok(\lambda_{n-p,q-1})=0$ and $\Ker(\lambda_{n-p,q})=\lambda^{-p+q} H^{n-q,p}_{\mathrm{prim}}(X)$ by Hard Lefschetz, and so 
\[
H^q(U_X, \W^{p} \T_{U_X})_m \cong H^{n-q,p}_{\mathrm{prim}}(X)
\]
in this case. Note that for $p=q$ this gives $H^p(U_X, \W^{p} \T_{U_X})_m \cong H^{n-p,p}_{\mathrm{prim}}(X)$.

If $p>q$, again by Hard Lefschetz we have $\Ker(\lambda_{n-p,q})=0$ and by definition $\Cok(\lambda_{n-p,q-1})=H^{n-p+1,q}_{\mathrm{prim}}(X)$, so that
\[
H^q(U_X, \W^{p} \T_{U_X})_m \cong H^{n-p+1,q}_{\mathrm{prim}}(X)
\]
in this case.
By setting $H^{i,j}_{\mathrm{prim}}(X)=0$ if $i+j>n$, we can summarize the above results as
\[
\HH^{p,q}(U_X)_m=H^q(U_X, \W^{p} \T_{U_X})_m \cong H^{n-p+1,q}_{\mathrm{prim}}(X)\oplus H^{n-q,p}_{\mathrm{prim}}(X)
\]
for any $p,q$.

%Now, as in the notations of \ref{t1}, we have \[ \Cok(\lambda_{p,q-1})=\Ker(\Lambda_{p+1,q})= H^{p+1,q}_{\prim}(X), \]\[\Ker(\lambda_{p,q}) =H^{p,q}_{\coprim}(X). \]
%Notice that, if $p+q=n$, $H^{p,q}_{\coprim}(X)= H^{p,q}_{\prim}(X)$ and $\Cok(\lambda_{p,q-1})=0$. On the other hand if $p+q<n$ we have $\Ker(\lambda_{p,q})=0$ by Hard Lefschetz and thus \[H^q(U, \W^{n-p} \T_{U_X})_m \cong H^{p+1,q}_{\prim}(X), \] while con the converse if $p+q <n$ then $ \Cok(\lambda_{p,q-1})=0$ and then
%\[H^q(U, \W^{n-p} \T_U)_m \cong H^{p,q}_{\coprim}(X). \]
%
%Putting everything together, we have
%
%\[\bigoplus_{p,q} H^q({U_X}, \W^{n-p}\T_{U_X})_{m} = \bigoplus_{p,q} H^{p+1,q}_{\prim}(X) \oplus H^{p,q}_{\coprim}(X) \]

%Finally we have \[ \bigoplus_{p,q} H^q({U_X}, \W^{n-p}\T_{U_X})_{m} \hookrightarrow \bigoplus_{p,q,m} H^q({U_X}, \W^{n-p}\T_{U_X})_{m}= \bigoplus_{p,q} H^q({U_X}, \W^{n-p}\T_{U_X}) =\HH^\bullet({U_X})\] and thus we proved the theorem.

\end{proof}

The above Theorem \ref{Hoch decomposition} admits a nice rephrasing in terms of the derived deformation complex of $A_X$,
\[
T^{p,q}_{A_X}:=\Ext^q_{\of_{A_X}}(\wedge^p\mathbb L_{A_X}, \of_{A_X}). 
\]
\begin{cor} Let $X$ be a smooth subcanonical projectively normal variety of dimension $n$ which is arithmetically Cohen-Macaulay, and let $m\in \mathbb{Z}$ be the integer such that $\omega_X \cong \of_X(m)$. Let $A_X$ the affine cone of $X$. Then, for every $1\leq p\leq n+1$ and $0\leq q\leq n$, we have
%
% \[ \HH^{p,q}(A)_{m} \cong H^{n-p,q+1}_{\mathrm{prim}}(X)\oplus H^{n-q,p}_{\mathrm{prim}}(X). \]
% 
 \[
(T^{p,q}_{A_X})_m= \Ext^q_{\of_{A_X}}(\Omega^p_{A_X}, \of_{A_X})_m
\cong 
\begin{cases}
H^{n-p+1,q}_{\mathrm{prim}}(X) \textrm{ if } p > q \\ \\
H^{n-q,p}_{\mathrm{prim}}(X) \quad \textrm{ if } p\leq q.
\end{cases}
\]
\end{cor}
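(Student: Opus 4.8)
The plan is to deduce the statement from Theorem \ref{Hoch decomposition} together with the Hochschild--Kostant--Rosenberg isomorphism. Since $U_X$ is a smooth quasiprojective variety, \cite{caldararu} gives $\HH^{p,q}(U_X) \cong H^q(U_X, \W^p \T_{U_X})$, and Theorem \ref{Hoch decomposition} already identifies the weight $m$ piece $H^q(U_X, \W^p\T_{U_X})_m$ with the right-hand side of the corollary, with at most one primitive summand surviving. Thus the entire content to be proved is the single identification
\[
(T^{p,q}_{A_X})_m = \Ext^q_{\of_{A_X}}(\W^p \mathbb{L}_{A_X}, \of_{A_X})_m \cong H^q(U_X, \W^p \T_{U_X})_m,
\]
valid in the range $1 \leq p \leq n+1$, $0 \leq q \leq n$. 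This is exactly the higher-$p$ analogue of the identification $(T^1_{A_X})_m \cong H^1(U_X, \T_{U_X})_m$ established in Theorem \ref{t2}, and I would organise the proof so as to reproduce that argument one wedge power at a time.

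First I would restrict everything to the smooth locus. On $U_X$ the cotangent complex $\mathbb{L}_{A_X}|_{U_X}$ is quasi-isomorphic to the locally free sheaf $\Omega^1_{U_X}$, hence its derived exterior power $\W^p\mathbb{L}_{A_X}|_{U_X}$ is just $\Omega^p_{U_X}$; dualising against $\of_{U_X}$ and using local freeness gives $\mathcal{E}xt^{>0}=0$ and $\mathcal{H}om(\Omega^p_{U_X}, \of_{U_X}) = \W^p\T_{U_X}$. Taking cohomology on $U_X$ then produces $\Ext^q_{\of_{U_X}}(\Omega^p_{U_X}, \of_{U_X}) \cong H^q(U_X, \W^p\T_{U_X})$, which is the object appearing on the right. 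The restriction map along the open immersion $j\colon U_X \hookrightarrow A_X$ therefore gives a canonical comparison morphism $(T^{p,q}_{A_X})_m \to H^q(U_X, \W^p\T_{U_X})_m$, and the task reduces to showing that this morphism is an isomorphism in the stated bidegree range and weight.

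The heart of the argument is a depth estimate at the vertex. Because $X$ is smooth, $A_X$ has an isolated singularity at $0$, and because $X$ is arithmetically Cohen--Macaulay the graded ring $\mathfrak{a}_X$ is Cohen--Macaulay of Krull dimension $n+1$, so that the local cohomology $H^i_{\mathfrak{m}}(\mathfrak{a}_X)$ vanishes for $i \leq n$; equivalently $H^i(X, \of_X(k))=0$ for all $k$ and $0<i<n$. I would use this to control the two discrepancies between the $A_X$-side and the $U_X$-side: (i) the \emph{derived correction}, namely the contribution of the terms of $\W^p\mathbb{L}_{A_X}$ beyond $\Omega^p_{A_X}$, which is supported at $0$ and enters only through local cohomology at the vertex; and (ii) the \emph{passage from the affine $A_X$ to the punctured cone $U_X$}, which compares $\Gamma(A_X,-)$ with the higher cohomology $H^q(U_X,-)$ and is again governed by the groups $H^\ast_{\mathfrak{m}}$. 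The Cohen--Macaulay vanishing kills both corrections precisely in the range $1 \leq p \leq n+1$, $0 \leq q \leq n$, playing the role that the single vanishing $H^1(U_X, \of_{U_X}) = \bigoplus_k H^1(X, \of_X(k)) = 0$ plays in the proof of Theorem \ref{t2}. Concretely I would combine the local-to-global $\Ext$ spectral sequence for $\Ext^q_{\of_{A_X}}(\Omega^p_{A_X}, \of_{A_X})$ with the local cohomology sequence relating global sections over $A_X$ and cohomology over $U_X$, and read off the isomorphism in weight $m$.

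The main obstacle I anticipate is item (i): justifying that the derived exterior power $\W^p\mathbb{L}_{A_X}$ may be replaced by the naive Kähler power $\Omega^p_{A_X}$ in weight $m$, as the statement of the corollary asserts. For $p=1$ this is automatic, but for $p\geq 2$ the failure of $A_X$ to be a local complete intersection means $\mathbb{L}_{A_X}$ genuinely carries cohomology in negative degrees, and one must verify that the resulting contributions either sit in the wrong weight or are annihilated by the Cohen--Macaulay local-cohomology vanishing. Once this bookkeeping is secured, combining the comparison isomorphism with the HKR identification and Theorem \ref{Hoch decomposition} yields the case distinction exactly as stated, the surviving summand being dictated by Hard Lefschetz as in the proof of Theorem \ref{Hoch decomposition}.
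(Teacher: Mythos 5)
Your plan is essentially the paper's proof: reduce to the punctured cone $U_X$, use smoothness of $U_X$ (HKR) to identify $\Ext^q_{\of_{U_X}}(\Omega^p_{U_X},\of_{U_X})$ with $H^q(U_X,\W^p\T_{U_X})=\HH^{p,q}(U_X)$, control the passage from $A_X$ to $U_X$ by depth at the vertex, and then quote Theorem \ref{Hoch decomposition}. The two points you isolate are exactly the two steps of the paper's argument, each settled by a citation: the affine-versus-punctured comparison follows from $\mathrm{depth}_0 A_X\geq n$ (the aCM hypothesis) via SGA 2, Expos\'e VI \cite{SGA2}, and the ``derived correction'' you flag as the main obstacle is handled by \cite[Lemma 3.2]{filip}. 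One clarification on the latter: the mechanism that lets one replace $\W^p\mathbb{L}_{A_X}$ by $\Omega^p_{A_X}$ is not the Cohen--Macaulay local-cohomology vanishing but the fact that $A_X$ has an isolated singularity, hence is smooth outside a closed subset of codimension $n+1$; this is precisely what produces the range $1\leq p\leq n+1$, $0\leq q\leq n$ in the statement, whereas the aCM hypothesis enters only in the second step. With that reference in hand your bookkeeping closes and the argument is complete.
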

\begin{proof}
Since $X$ is a smooth projective variety of dimension $n$, the affine cone $A_X$ is smooth in codimension $n+1$ and so for $1\leq p\leq n+1$ and $0\leq q\leq n$ we have $T^{p,q}_{A_X}=\Ext^q_{\of_{A_X}}(\Omega^p_{A_X}, \of_{A_X})$, see, e.g., \cite[Lemma 3.2]{filip}.  Since $X$ is also arithmetically Cohen-Macaulay, we have $\mathrm{depth}_0A_X \geq n$ and this, following SGA 2 Expos\`e VI, \cite{SGA2}, implies that the inclusion $U_X\hookrightarrow A_X$ induces an isomorphism $\Ext^q_{\of_{A_X}}(\Omega^p_{A_X}, \of_{A_X})\cong \Ext_{\of_{U_X}}^q(\Omega^p_{U_X}, \of_{U_X})$. Finally, since $U_X$ is smooth, we have $\Ext_{\of_{U_X}}^q(\Omega^p_{U_X}, \of_{U_X})\cong \HH^{p,q}(U_X)$.
\end{proof}
Although the above corollary is essentially a rephrasing of Theorem \ref{Hoch decomposition}, it is important to stress that, when we consider the whole affine cone, the Ext modules becomes easy to compute using computer algebra software as SINGULAR or MACAULAY2 (\cite{singular}, \cite{M2}). In particular it should be possible to write down a computer package - similar to the ne already existing for $T^1$ and $T^2$ - able to compute all of the Hodge numbers of a smooth projective arithmetically Cohen-Macaulay variety.

%\begin{cor}From the proof of the previous theorem we have that for every $k=p+q$ either the primitive or coprimitive part is zero. In particular we have \[H^q({U_X}, \W^{n-p}\T_{U_X})_{m} \cong  H^{p,q}_{\prim}(X)  \textrm{ if }p+q=n,\]
%\[H^q({U_X}, \W^{n-p}\T_{U_X})_{m}  \cong H^{p+1,q}_{\prim}(X) \textrm{ if }p+q<n, \]
%\[H^q({U_X}, \W^{n-p}\T_{U_X})_{m} \cong  H^{p,q}_{\coprim}(X)  \textrm{ if }p+q>n,\]
%
%and at the Hochshild level  the decomposition is more complicate to write in an elegant way. Nevertheless we can write\[
%\HH^k({U_X})_{m} \cong \bigoplus_{p+q=k} \delta_{p>q}H^{n-p+1,q}_{\prim}(X) \oplus  \delta_{p \leq q}  H^{n-p,q}_{\coprim}(X),\] where $\delta_{p>q}$ is zero for $p>q$, one otherwise (and similar for $\delta_{p \leq q}$).
%
%\end{cor}

%\begin{cor}If we sum all over indexes, theorem \ref{Hoch decomposition} gives us the following Hochshild-type decomposition:
%\[ 
%\HH^\bullet({U_X})_{m} \cong \bigoplus_{p,q} H^{p+1,q}_{\prim}(X) \oplus H^{n-q,n-p}_{\mathrm{prim}}(X),
%\]
%
%that in any degree $k$ takes the form of
%\[
%\HH^k({U_X})_{m} \cong \bigoplus_{p+q=k} H^{n-p+1,q}_{\prim}(X) \oplus H^{n-q,p}_{\prim}(X).
%\]
%\end{cor}
%\begin{rmk} Let us stress that in the previous decomposition \[\HH^{p,q}(U_X)_m \cong H^{n-p,q+1}_{\mathrm{prim}}(X)\oplus H^{n-q,p}_{\mathrm{prim}}(X).
%\] exactly one between the two terms on the right hand side is nonzero. So, in particular we have either \[\HH^{p,q}(U_X)_m \cong H^{n-p,q+1}_{\mathrm{prim}}(X) \textrm{ if } p \leq q \] or \[\HH^{p,q}(U_X)_m \cong H^{n-q,p}_{\mathrm{prim}}(X) \textrm{ if } p\geq q+1.\]
%\end{rmk}

\subsection{The case of a hypersurface}

The results of the previous section lead to an interesting corollary in the case of a hypersurface: we can use them to recover Griffiths' isomorphism between the  primitive cohomology of a degree $d$ smooth hypersurface $X\subseteq \PP^{n+1}$ and a distinguished graded component of the Milnor algebra of a polynomial defining $X$. We start with some preliminary Lemmas. Most of the proofs are a straightforward consequence of Lemma \ref{lemma.wedge}, of the short exact sequences
 \[ 0 \to \W^{p-i} \T_X \to \W^{p-i} \T_{\PP^{n+1}}|_X \to \W^{p-i-1}\T_X(d) \to 0 \]
 and
 \[ 0 \to \Omega^{k}_{\PP^{n+1}}(-d) \to \Omega^{k}_{\PP^{n+1}} \to \Omega^{k}_{\PP^{n+1}}|_X \to 0, \]
 of the duality isomorphisms \[\W^i\T_X(d-n-2)\cong \Omega_X^{n-i}\] and \[\W^i\T_{\PP^{n+1}}|_X(-n-2)\cong \Omega_{\PP^{n+1}}^{n+1-i}|_X,\] and of the Kodaira and Bott vanishing theorems \cite{bott} and are therefore omitted. The proof of Lemma \ref{lemma.detailed} is a bit more subtle, so it is spelled out in full detail. It also serves as an exemplification of the technique used to prove all the other Lemmas in this section. 
\begin{lem}\label{cor.questo.qui}
For $X$  a smooth, projective hypersurface in $\PP^{n+1}$ of degree $d$, we have a natural isomorphism
\[
H^{p-i}(U_X, \W^{p-i}\T_{U_X})_{m+kd}\cong H^{p-i}(X,\W^{p-i}\T_X(m+kd)),
\]
where $m=d-n-2$, for every $0\leq i\leq p\leq n$ and every $k\geq 1$. In particular,
for every $p\geq 2$ we have natural isomorphisms
\[
H^{1}(U_X,\T_{U_X})_{m+(p-1)d}\cong H^{1}(X,\T_X(m+(p-1)d))
\]
and
\[
H^{p-1}(U_X, \W^{p-1}\T_{U_X})_{m+d}\cong H^{p-1}(X,\W^{p-1}\T_X(m+d))
\]

\end{lem}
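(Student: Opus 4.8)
The plan is to extract the isomorphism directly from the long exact sequence of Lemma \ref{lemma.wedge} and to force it by killing two neighbouring cohomology groups. Writing $P=p-i$ and $\ell=m+kd$, and specialising Lemma \ref{lemma.wedge} to cohomological degree $q=P$, wedge power $P$ and twist $\ell$, I would isolate the stretch
\[
H^P(X,\W^{P-1}\T_X(\ell))\to H^P(U_X,\W^P\T_{U_X})_\ell\to H^P(X,\W^P\T_X(\ell))\xrightarrow{\lambda}H^{P+1}(X,\W^{P-1}\T_X(\ell)).
\]
By exactness the central map is an isomorphism precisely when the two outer terms $H^P(X,\W^{P-1}\T_X(\ell))$ and $H^{P+1}(X,\W^{P-1}\T_X(\ell))$ vanish, so the whole proof reduces to this vanishing. (The extreme case $P=0$ is trivial, since then $\W^{-1}\T_X=0$ and the outer terms are already zero.)

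To check the vanishing I would first pass to differential forms through the duality isomorphism $\W^i\T_X(m)\cong\Omega_X^{n-i}$, i.e. $\W^{P-1}\T_X\cong\Omega_X^{\,n-P+1}(-m)$. Twisting by $\ell$ and using $\ell-m=kd$ turns both outer terms into $H^P(X,\Omega_X^{\,n-P+1}(kd))$ and $H^{P+1}(X,\Omega_X^{\,n-P+1}(kd))$. Putting $a=n-P+1$, these read $H^{\,n-a+1}(X,\Omega_X^a(kd))$ and $H^{\,n-a+2}(X,\Omega_X^a(kd))$; since $k\geq 1$ and $d\geq 1$ the bundle $\of_X(kd)$ is ample, and in both the sum of the form degree $a$ and the cohomological degree is $n+1$, respectively $n+2$, hence strictly larger than $n=\ddim X$. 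Both groups therefore vanish by Akizuki--Nakano vanishing. If one wishes to remain inside the toolkit quoted above, the same vanishing is obtained by restricting from $\PP^{n+1}$ via the conormal sequence and the sequence $0\to\Omega^a_{\PP^{n+1}}(-d)\to\Omega^a_{\PP^{n+1}}\to\Omega^a_{\PP^{n+1}}|_X\to 0$, reducing everything to the Bott and Kodaira vanishing theorems on $\PP^{n+1}$.

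Finally, the two displayed special cases are mere specialisations of the general formula: the first is $P=1$ with $k=p-1$, which is legitimate exactly because $p\geq 2$ forces $k\geq 1$, and the second is $P=p-1$ with $k=1$, so that $\ell=m+d$. I expect the only genuinely delicate point to lie in the boundary value $k=1$: if one prefers the reduction to $\PP^{n+1}$ rather than citing Akizuki--Nakano, the induction on the form degree $a$ across the two short exact sequences has to be steered carefully, because a twist of the shape $\of(0)$ contributes the nonzero class $H^a(\PP^{n+1},\Omega^a_{\PP^{n+1}})\cong\C$ in the Bott formula, and one must verify that it never falls into the cohomological degrees $n-a+1$ and $n-a+2$ that matter here. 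Invoking Akizuki--Nakano directly bypasses this bookkeeping, which is presumably why the lemma is listed among the straightforward ones.
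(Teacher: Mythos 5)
Your proof is correct and is essentially the argument the paper intends for this omitted proof: isolate the relevant stretch of the long exact sequence of Lemma \ref{lemma.wedge}, rewrite the two flanking terms $H^{P}(X,\W^{P-1}\T_X(m+kd))$ and $H^{P+1}(X,\W^{P-1}\T_X(m+kd))$ as $H^{n-a+1}(X,\Omega_X^{a}(kd))$ and $H^{n-a+2}(X,\Omega_X^{a}(kd))$ with $a=n-P+1$ via the duality $\W^{j}\T_X(m)\cong\Omega_X^{n-j}$, and kill them by Kodaira--Akizuki--Nakano vanishing since $kd\geq 1$ and the total degree exceeds $n$ --- which is precisely the vanishing the paper itself invokes under the name ``Kodaira Vanishing'' in the detailed proof of Lemma \ref{lemma.detailed}. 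Your handling of the $P=0$ boundary and of the two displayed special cases ($P=1$, $k=p-1$ and $P=p-1$, $k=1$) is also correct.
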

%\begin{proof} Using the duality isomorphisms $\W^i\T_X(m)\cong \Omega_X^{n-i}$, by Lemma \ref{lemma.wedge} we have a short exact sequence
%\[
%0\to \mathrm{coker}\{ \lambda_{n-p+i,p-i-1}(kd)\} \rightarrow H^{p-i}({U_X}, \W^{p-i}\T_{U_X})_{m+kd} {\rightarrow}\ker\{\lambda_{n-p+i,p-i}(kd)\}\rightarrow 0
%\]
%where $\lambda_{i,j}(kd)$ is the Lefschetz morphism $H^j(X,\Omega_X^i(kd))\to H^{j+1}(X,\Omega_X^{i+1}(kd))$. Since $H^{p-i}(X,\Omega_X^{n-p+i+1}(kd))=0$ and $H^{p-i+1}(X,\Omega_X^{n-p+i+1}(kd))=0$ by Kodaira vanishing, this reduces to
% \[
%0\rightarrow H^{p-i}({U_X}, \W^{p-i}\T_{U_X})_{m+kd} {\rightarrow}H^{p-i}(X,\Omega^{n-p+i}(kd))\rightarrow 0
%\]
%i.e., to
%\[
% H^{p-i}({U_X}, \W^{p-i}\T_{U_X})_{m+kd} \cong H^{p-i}(X,\W^{p-i}\T_X(m+kd))
%\]
%\end{proof}
%
%\begin{cor}\label{cor.questo.qui}
%For every $p\geq 2$ we have natural isomorphisms
%\[
%H^{1}(U_X,\T_{U_X})_{m+(p-1)d}\cong H^{1}(X,\T_X(m+(p-1)d))
%\]
%and
%\[
%H^{p-1}(U_X, \W^{p-1}\T_{U_X})_{m+d}\cong H^{p-1}(X,\W^{p-1}\T_X(m+d))
%\]
%\end{cor}

\begin{lem}\label{last}
Let $X$ a smooth, projective hypersurface in $\PP^{n+1}$ of degree $d$.%, with adjunction degree $m=d-n-2$. 
Then  we have a natural isomorphism
\[
H^{p-i-1}(X,\W^{p-i-1}\T_X(m+(k+1)d)) \cong H^{p-i}(X,\W^{p-i}\T_X(m+kd)),
\]
where $m=d-n-2$, for every $0\leq i\leq p-2$, with $0\leq p\leq n$. In particular,
one has a natural isomorphism
\[
H^{1}(X,\T_X(m+(p-1)d)) \cong H^{p-1}(X,\W^{p-1}\T_X(m+d)),   
\]
for any $2\leq p\leq n$.
%
%\[ H^1(U_X, \T_{U_X})_{2d-n-2} \cong H^2(U_X, \W^2 \T_{U_X})_{d-n-2}. \]
\end{lem}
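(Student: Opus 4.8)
The plan is to deduce the isomorphism from the two short exact sequences recalled just before the statement, together with Bott vanishing on $\PP^{n+1}$. Throughout I write $a=p-i$, so that the hypothesis $0\le i\le p-2$ becomes $2\le a\le p\le n$, and I assume $k\ge 1$ as in Lemma \ref{cor.questo.qui}. The target isomorphism then reads $H^{a-1}(X,\W^{a-1}\T_X(m+(k+1)d))\cong H^{a}(X,\W^{a}\T_X(m+kd))$. First I would twist the sequence
\[ 0 \to \W^{a}\T_X \to \W^{a}\T_{\PP^{n+1}}|_X \to \W^{a-1}\T_X(d)\to 0 \]
by $\of_X(m+kd)$ and pass to the long exact cohomology sequence. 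The relevant connecting homomorphism is
\[ H^{a-1}(X,\W^{a-1}\T_X(m+(k+1)d)) \xrightarrow{\ \delta\ } H^{a}(X,\W^{a}\T_X(m+kd)), \]
so it suffices to kill the two flanking groups, i.e. to show
\[ H^{a-1}(X,\W^{a}\T_{\PP^{n+1}}|_X(m+kd))=H^{a}(X,\W^{a}\T_{\PP^{n+1}}|_X(m+kd))=0. \]

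Next I would rewrite these ambient groups via the duality isomorphism $\W^{a}\T_{\PP^{n+1}}|_X\cong \Omega^{n+1-a}_{\PP^{n+1}}|_X(n+2)$ recalled before the statement. Since $m=d-n-2$ we have $m+kd+(n+2)=(k+1)d$, hence $\W^{a}\T_{\PP^{n+1}}|_X(m+kd)\cong \Omega^{n+1-a}_{\PP^{n+1}}|_X((k+1)d)$, and the vanishing to be proved becomes $H^{j}(X,\Omega^{n+1-a}_{\PP^{n+1}}|_X((k+1)d))=0$ for $j=a-1,a$. I would then twist the restriction sequence $0\to\Omega^{c}_{\PP^{n+1}}(-d)\to\Omega^{c}_{\PP^{n+1}}\to\Omega^{c}_{\PP^{n+1}}|_X\to 0$, with $c=n+1-a$, by $\of((k+1)d)$; its long exact sequence reduces the two vanishings on $X$ to four vanishings on $\PP^{n+1}$, namely those of $H^{a-1}(\PP^{n+1},\Omega^{c}((k+1)d))$, $H^{a}(\PP^{n+1},\Omega^{c}(kd))$, $H^{a}(\PP^{n+1},\Omega^{c}((k+1)d))$ and $H^{a+1}(\PP^{n+1},\Omega^{c}(kd))$.

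Finally I would check each of these four groups against Bott's formula, which forbids $H^{q}(\PP^{n+1},\Omega^{c}(t))\neq 0$ unless $q=0$ and $t>c$, or $q=c$ and $t=0$, or $q=n+1$ and $t<c-(n+1)$. The positivity of the twists $kd,(k+1)d$ (here the assumption $k\ge 1$, which rules out the middle case $t=0$, is essential) together with the bounds $2\le a\le n$ excludes all three nonvanishing regimes; this bookkeeping is the only genuinely computational step and the main point to get right, especially at the boundary values $c=a$ (where $2a=n+1$) and $a=n$ (where the index $a+1$ equals $n+1$). For the displayed ``in particular'' statement I would then iterate the general isomorphism along $a=2,3,\dots,p-1$ with $k=p-2,\dots,1$, obtaining the chain
\[ H^{1}(X,\T_X(m+(p-1)d))\cong H^{2}(X,\W^{2}\T_X(m+(p-2)d))\cong\cdots\cong H^{p-1}(X,\W^{p-1}\T_X(m+d)), \]
which stays within the asserted range $2\le p\le n$.
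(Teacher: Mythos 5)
Your argument is correct and is exactly the route the paper intends: the paper omits the proof of this lemma, stating that it follows from the two short exact sequences, the duality isomorphisms and Bott vanishing, which is precisely the reduction you carry out (and which mirrors the detailed argument given for Lemma \ref{lemma.detailed}). Your bookkeeping with Bott's formula, including the observation that $k\geq 1$ rules out the $t=0$ exceptional case and that the positive twists kill the $q=0$ and $q=n+1$ regimes, is accurate, as is the iteration yielding the ``in particular'' statement.
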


\begin{lem}\label{prima}
Let $0\leq p\leq n$. If $n\neq 2p$ then we have a natural isomorphism
\[
H^{p}(U_X, \W^{p}\T_{U_X})_{m}\cong H^{p}(X,\W^{p}\T_X(m))
\]
\end{lem}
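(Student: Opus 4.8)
The plan is to identify both sides with a single Hodge-theoretic group and then compare. On the right-hand side, recall from the duality isomorphism $\W^{p}\T_X(m)\cong \Omega_X^{n-p}$ (valid since $m=d-n-2$) that $H^{p}(X,\W^{p}\T_X(m))\cong H^{p}(X,\Omega_X^{n-p})=H^{n-p,p}(X)$. On the left-hand side, since $U_X$ is smooth we have $H^{p}(U_X,\W^{p}\T_{U_X})=\HH^{p,p}(U_X)$, and the diagonal case $p=q$ of Theorem \ref{Hoch decomposition} already yields $\HH^{p,p}(U_X)_m\cong H^{n-p,p}_{\prim}(X)$. Thus the statement reduces to the purely Hodge-theoretic assertion that, for a smooth hypersurface $X\subseteq\PP^{n+1}$ with $n\neq 2p$, the primitive and full Hodge groups in the middle degree coincide, i.e. $H^{n-p,p}_{\prim}(X)\cong H^{n-p,p}(X)$.

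To prove this I would invoke the Lefschetz decomposition $H^{n-p,p}(X)=\bigoplus_{k\geq 0}\lambda^{k}H^{n-p-k,p-k}_{\prim}(X)$, whose $k=0$ summand is exactly $H^{n-p,p}_{\prim}(X)$. For $k\geq 1$ the summand $H^{n-p-k,p-k}_{\prim}(X)$ lies in total degree $n-2k<n$, strictly below the middle dimension. Here the hypersurface hypothesis is decisive: by the Lefschetz hyperplane theorem the cohomology of $X$ in degrees below $n$ agrees with that of $\PP^{n+1}$, hence is of Tate type and carries no primitive classes apart from the generator of $H^{0,0}$. Consequently $H^{n-p-k,p-k}_{\prim}(X)=0$ unless $n-p-k=p-k=0$, that is unless $n=2p$ (with $k=p$). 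Under the assumption $n\neq 2p$ every $k\geq 1$ summand vanishes, leaving $H^{n-p,p}(X)=H^{n-p,p}_{\prim}(X)$, which is what we want.

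Alternatively, and more in the spirit of the other lemmas of this section, one can argue directly from Lemma \ref{lemma.wedge}: taking $q=p$ and $k=m$ and applying duality produces the short exact sequence $0\to\Cok(\lambda_{n-p,p-1})\to H^{p}(U_X,\W^{p}\T_{U_X})_m\to\Ker(\lambda_{n-p,p})\to 0$. The target $H^{n-p+1,p+1}(X)$ of $\lambda_{n-p,p}$ sits in total degree $n+2$ and, being off the Hodge diagonal unless $n=2p$, vanishes for $n\neq 2p$, so the kernel is all of $H^{n-p,p}(X)$; similarly the target $H^{n-p+1,p}(X)$ of $\lambda_{n-p,p-1}$ lies in degree $n+1$ and vanishes unless $n=2p-1$, while in that exceptional case its source and target are both one-dimensional and the map is an isomorphism by Hard Lefschetz, so the cokernel is always zero. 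The main (and essentially only) obstacle is the input that the off-diagonal Hodge numbers of $X$ vanish away from the middle dimension; this is precisely where being a hypersurface is indispensable, and it is exactly what fails when $n=2p$, since then the extra class $\lambda^{p}$ survives in $H^{p,p}(X)$ and the isomorphism breaks down.
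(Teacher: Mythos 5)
Your proof is correct and follows the same route as the paper: identify the left-hand side with $H^{n-p,p}_{\prim}(X)$ via the diagonal case of Theorem \ref{Hoch decomposition} and the right-hand side with $H^{n-p,p}(X)$ via the duality $\W^{p}\T_X(m)\cong\Omega_X^{n-p}$, then observe that primitive and full cohomology in the middle degree agree for $n\neq 2p$ by Hard Lefschetz combined with the Lefschetz hyperplane theorem. You supply more detail than the paper does (the explicit Lefschetz decomposition argument, plus an alternative direct computation from Lemma \ref{lemma.wedge}), but the underlying idea is identical.
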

\begin{proof}
We know from Theorem \ref{Hoch decomposition} that $H^{p}(U_X, \W^{p}\T_{U_X})_{m}\cong H^{n-p,p}_{\prim}(X)$. But for a smooth hypersurface in $\PP^{n+1}$ one has $H^{p,n-p}(X) = H^{p,n-p}_{\prim}(X)$ for any $p$ such $n\neq 2p$, due to Hard Lefschetz combined with the Lefschetz hyperplane theorem. 
\end{proof}

\begin{lem}\label{lemma.detailed}
Let $0\leq p\leq n$. If $n\neq 2p$ then we have a natural isomorphism
\[ H^{p-1}(\W^{p-1}\T_X(m+d)) \cong H^p( \W^p \T_X(m)) \]
\end{lem}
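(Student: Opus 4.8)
\subsection*{Proof proposal for Lemma \ref{lemma.detailed}}

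The natural strategy is to shift weight and wedge-degree simultaneously by means of the restriction sequence
\[ 0 \to \W^p\T_X \to \W^p\T_{\PP^{n+1}}|_X \to \W^{p-1}\T_X(d) \to 0, \]
which after twisting by $\of_X(m)$ becomes
\[ 0 \to \W^p\T_X(m) \to \W^p\T_{\PP^{n+1}}|_X(m) \to \W^{p-1}\T_X(m+d) \to 0. \]
First I would pass to the associated long exact sequence and isolate the connecting homomorphism
\[ \delta\colon H^{p-1}(\W^{p-1}\T_X(m+d)) \to H^p(\W^p\T_X(m)). \]
The assertion is precisely that $\delta$ is an isomorphism, and this follows once I control the two flanking terms $H^{p-1}(X,\W^p\T_{\PP^{n+1}}|_X(m))$, which governs injectivity of $\delta$, and $H^p(X,\W^p\T_{\PP^{n+1}}|_X(m))$, which governs surjectivity. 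I would work in the range $p\geq 2$ (so that the relevant cohomological degrees $q=p-1,p$ are $\geq 1$), the remaining boundary values of $p$ being covered by the earlier results of this section.

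To compute these flanking terms I would use the duality isomorphism $\W^p\T_{\PP^{n+1}}|_X(m)\cong \Omega^{n+1-p}_{\PP^{n+1}}|_X(d)$ and then the ambient restriction sequence
\[ 0 \to \Omega^{n+1-p}_{\PP^{n+1}} \to \Omega^{n+1-p}_{\PP^{n+1}}(d) \to \Omega^{n+1-p}_{\PP^{n+1}}|_X(d) \to 0. \]
Bott vanishing annihilates the twisted ambient groups $H^{\geq 1}(\PP^{n+1},\Omega^{n+1-p}_{\PP^{n+1}}(d))$, so for $q\geq 1$ the long exact sequence identifies $H^q(X,\Omega^{n+1-p}_{\PP^{n+1}}|_X(d))$ with $H^{q+1}(\PP^{n+1},\Omega^{n+1-p}_{\PP^{n+1}})$, and by Bott the latter is $\C$ exactly when $q+1=n+1-p$ and vanishes otherwise. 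Reading this off at $q=p$ shows the right-hand flanking term vanishes because $n\neq 2p$, so $\delta$ is surjective; reading it at $q=p-1$ shows the left-hand flanking term vanishes \emph{unless} $n=2p-1$. Hence for $n\neq 2p,\,2p-1$ the map $\delta$ is an isomorphism immediately.

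The main obstacle is the self-dual case $n=2p-1$, which is exactly why this Lemma is singled out for a full proof. Here $n+1-p=p$, and the left-hand flanking term is a single copy of $\C$ coming from $H^{p}(\PP^{n+1},\Omega^{p}_{\PP^{n+1}})=\C\cdot h^{p}$, i.e.\ from the non-primitive hyperplane power. To retain injectivity of $\delta$ I would show that the preceding restriction map
\[ \alpha\colon H^{p-1}(X,\W^p\T_X(m)) \to H^{p-1}(X,\W^p\T_{\PP^{n+1}}|_X(m))\cong \C \]
is surjective, so that by exactness the surviving $\C$ has zero image in $H^{p-1}(\W^{p-1}\T_X(m+d))$ and $\ker\delta=0$. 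Dualizing, $\alpha$ is a map $H^{p-1,p-1}(X)\to\C$, and a diagram chase identifies its composite with the connecting isomorphism onto $H^{p,p}(\PP^{n+1})$ as the Gysin pushforward $i_*\colon H^{p-1,p-1}(X)\to H^{p,p}(\PP^{n+1})$ along $i\colon X\hookrightarrow\PP^{n+1}$. Since $i_*(i^*h^{p-1})=\deg(X)\,h^{p}\neq 0$, the map $\alpha$ is surjective; thus $\delta$ is injective and, with the surjectivity already established, an isomorphism. The delicate point to verify carefully is precisely this identification of the boundary map with $i_*$, which is where the self-duality $n=2p-1$ forces the residual hyperplane class to appear and must be shown to die.
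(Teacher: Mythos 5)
Your proposal is correct and follows essentially the same route as the paper: the twisted restriction sequence for $\W^p\T$, duality to $\Omega^{n+1-p}_{\PP^{n+1}}|_X(d)$, Bott vanishing to kill the flanking terms except for the residual $\C\cong H^{p,p}(\PP^{2p})$ when $n=2p-1$, and then surjectivity onto that $\C$ to rescue injectivity of the connecting map. Your identification of the relevant map as the Gysin pushforward $i_*$ with $i_*i^*h^{p-1}=d\,h^p$ is, via the projection formula, the same computation the paper performs by precomposing with restriction and recognizing cup product with $c_1(\of_{\PP^{2p}}(d))$.
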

\begin{proof}
From the short exact sequence
 \[ 0 \to \W^{p} \T_X \to \W^{p} \T_{\PP^{n+1}}|_X \to \W^{p-1}\T_X(d) \to 0, \]
using the duality isomorphisms $\W^i\T_X(m)\cong \Omega_X^{n-i}$ and $\W^i\T_{\PP^{n+1}}|_X(m)\cong \Omega_{\PP^{n+1}}^{n+1-i}|_X(d)$ we get to the long exact sequence 
\[ \ldots\to H^{n-p,p-1}(X) \to H^{p-1}(X,\Omega^{n-p+1}_{\PP^{n+1}}(d)|_X) \to H^{p-1}(X,\Omega_X^{n-p+1}(d)) \to \]
\[
\to H^{n-p,p}(X) \to H^{p}(X,\Omega^{n-p+1}_{\PP^{n+1}}(d)|_X) \to 0,\]
where the last zero comes from $H^{p}(X,\Omega_X^{n-p+1}(d))=0$ by  Kodaira Vanishing.\\
Now, consider the short exact sequence (see \cite{stability})
\[ 0 \to \Omega^{n-p+1}_{\PP^{n+1}} \to \Omega^{n-p+1}_{\PP^{n+1}}(d) \to \Omega^{n-p+1}_{\PP^{n+1}}(d)|_X \to 0. \]
This induces the long exact sequence 

\[ \cdots \to H^{p}(\PP^{n+1}, \Omega^{n-p+1}_{\PP^{n+1}}(d)) \to H^{p}(\PP^{n+1},\Omega_{\PP^{n+1}}^{n-p+1}(d)|_X) \to 
\]
\[
 \to H^{p+1}(\PP^{n+1},\Omega_{\PP^{n+1}}^{n-p+1})\to H^{p+1}(\PP^{n+1}, \Omega^{n-p+1}_{\PP^{n+1}}(d))\to\cdots  \]
By Kodaira vanishing we have  $H^{p+1}(\PP^{n+1}, \Omega^{n-p+1}_{\PP^{n+1}}(d))=0$.
%, and so the above long exact sequence reduces to
%\[ \cdots \to H^{p-i}(\PP^{n+1}, \Omega^{n-p+i+1}_{\PP^{n+1}}((k+1)d)) \to H^{p-i}(\PP^{n+1},\Omega_{\PP^{n+1}}^{n-p+i+1}((k+1)d)|_X) \to
%\]
%\[
%\to H^{p-i+1}(\PP^{n+1},\Omega_{\PP^{n+1}}^{n-p+i+1}(kd))\to0.
%\]
Also we have the long exact sequence
\[ \cdots \to H^{p-1}(\PP^{n+1}, \Omega^{n-p+1}_{\PP^{n+1}}(d)) \to H^{p-1}(\PP^{n+1},\Omega_{\PP^{n+1}}^{n-p+1}(d)|_X) \to
\]
\[
\to H^{p}(\PP^{n+1},\Omega_{\PP^{n+1}}^{n-p+1})\to H^{p}(\PP^{n+1}, \Omega^{n-p+1}_{\PP^{n+1}}(d))\to\cdots  \]
%Let us distinguish several cases. 
%
%The first case is $0\leq i<p\leq n$ with 
By Bott vanishing also \[H^{p}(\PP^{n+1}, \Omega^{n-p+1}_{\PP^{n+1}}(d))=0\] and \[H^{p-1}(\PP^{n+1}, 
\Omega^{n-p+1}_{\PP^{n+1}}(d))=0.\]
Now we consider two subcases. If $n\neq 2p-1$, then 
\[
H^{p}(\PP^{n+1},\Omega_{\PP^{n+1}}^{n-p+1}(d)|_X) \cong H^{p+1}(\PP^{n+1},\Omega_{\PP^{n+1}}^{n-p+1})=0
\]
and
\[
H^{p-1}(\PP^{n+1},\Omega_{\PP^{n+1}}^{n-p+1}(d)|_X) \cong H^{p}(\PP^{n+1},\Omega_{\PP^{n+1}}^{n-p+1})=0
\]
(where we used that by hypothesis $n\neq 2p$). So, we find
\[
H^{p-1}(X,\Omega_X^{n-p+1}(d)) \cong H^{p}(X,\Omega_X^{n-p})
\]
i.e.,
\[
H^{p-1}(X,\W^{p-1}\T_X(m+d)) \cong H^{p}(X,\W^{p}\T_X(m))
\]
in this case. If $n=2p-1$ then we still have 
\[
H^{p}(X,\Omega_{\PP^{2p}}^{p}(d)|_X) = H^{p}(\PP^{2p},\Omega_{\PP^{2p}}^{p}(d)|_X) \cong H^{p+1}(\PP^{2p},\Omega_{\PP^{2p}}^{p})=0
\]
while 
\[
H^{p-1}(X,\Omega_{\PP^{2p}}^{p}(d)|_X) = H^{p-1}(\PP^{2p},\Omega_{\PP^{2p}}^{p}(d)|_X) \cong H^{p}(\PP^{2p},\Omega_{\PP^{2p}}^{p})\cong\mathbb{C}.
\]
and so our initial long exact sequence becomes
\[ \ldots\to H^{p-1,p-1}(X) \xrightarrow{\eta} H^{p,p}(\PP^{2p}) \to H^{p-1}(X,\Omega_X^{p}(d)) \to H^{p-1,p}(X) \to 0. \]
By precomposing the map $\eta$ with the restriction morphism \[H^{p-1,p-1}(\PP^{2p})\to H^{p-1,p-1}(X)\] one obtains the cup product with $c_1(\of_{\PP^{2p}}(d))$, which is an isomorphism from \[H^{p-1,p-1}(\PP^{2p})\to H^{p,p}(\PP^{2p}).\] Hence $\eta$ is surjective, and so 
\[
H^{p-1}(X,\Omega_X^{p}(d)) \cong H^{p-1,p}(X). 
\]
Therefore,
\[
H^{p-1}(X,\W^{p-1}\T_X(m+d)) \cong H^{p}(X,\W^{p}\T_X(m)). 
\]
in this case, too.
\end{proof}
\begin{cor}\label{pn2}
For any $2\leq p\leq n$ with $n\neq 2p$ one has a natural isomorphism
\[
H^{1}(U_X,\T_{U_X})_{m+(p-1)d} \cong H^{p}(U_X, \W^{p}\T_{U_X})_{m}   
\]
\end{cor}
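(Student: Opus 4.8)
The plan is to read the four Lemmas just established as a single chain of natural isomorphisms that descends from the punctured cone $U_X$ to the hypersurface $X$, moves the twist around inside the cohomology of $X$, and then climbs back up to $U_X$. Throughout I would fix $2\leq p\leq n$ with $n\neq 2p$ and keep $m=d-n-2$ as in the Lemmas, so that all the relevant ``in particular'' statements are simultaneously in force.

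First I would invoke the special case of Lemma \ref{cor.questo.qui} to pass from the cone to the base in the top twist,
\[
H^{1}(U_X,\T_{U_X})_{m+(p-1)d}\cong H^{1}(X,\T_X(m+(p-1)d)).
\]
Next, the special case of Lemma \ref{last} rewrites this as a wedge-power class in lower twist,
\[
H^{1}(X,\T_X(m+(p-1)d)) \cong H^{p-1}(X,\W^{p-1}\T_X(m+d)),
\]
which is exactly where $p\geq 2$ enters. Then Lemma \ref{lemma.detailed}, whose hypothesis $n\neq 2p$ I have assumed, drops the twist by one further step,
\[
H^{p-1}(X,\W^{p-1}\T_X(m+d)) \cong H^{p}(X,\W^{p}\T_X(m)),
\]
and finally Lemma \ref{prima}, again using $n\neq 2p$, lifts the resulting class back to the punctured cone,
\[
H^{p}(X,\W^{p}\T_X(m)) \cong H^{p}(U_X, \W^{p}\T_{U_X})_{m}.
\]
Composing these four natural isomorphisms yields the asserted identification.

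Since every link is already proved and each isomorphism is natural, the argument is essentially bookkeeping, and there is no substantial obstacle. The one point I would verify carefully is that the single standing hypothesis $2\leq p\leq n$ with $n\neq 2p$ simultaneously activates all four Lemmas: the first two require only $p\geq 2$ (for $p=2$ the middle link degenerates harmlessly to an identity, since $\W^{p-1}\T_X=\T_X$), whereas the last two require the Hard-Lefschetz and primitivity input encoded in $n\neq 2p$. Confirming that each intermediate bidegree stays inside the admissible ranges $0\leq i\leq p\leq n$ of the Lemmas is the only — and very minor — technical check, and it is immediate from the explicit indices displayed above.
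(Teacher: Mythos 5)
Your proposal is correct and is exactly the argument the paper intends: the corollary is stated without proof precisely because it is the composition of the four preceding lemmas (Lemma \ref{cor.questo.qui}, Lemma \ref{last}, Lemma \ref{lemma.detailed}, Lemma \ref{prima}) in the order you chain them, with the hypotheses $p\geq 2$ and $n\neq 2p$ activating them just as you describe. Your bookkeeping of the admissible ranges and the degenerate $p=2$ case is also accurate, so there is nothing to add.
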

We are now left with considering the $n=2p$ case.
\begin{lem}\label{sopra}
Assume $n=2p$. Then we have a natural short exact sequence
\[ 0 \to H^{p-1}(X,\W^{p-1}\T_X(m+d)) 
\to H^{p,p}(X) \to \mathbb{C} \to 0,\]
\end{lem}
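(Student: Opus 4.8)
The plan is to run, in the hitherto excluded case $n=2p$, exactly the argument of Lemma \ref{lemma.detailed}, and to observe that the single diagonal cohomology class of $\PP^{n+1}$ which now survives upgrades the isomorphism of that lemma into a short exact sequence with an extra copy of $\mathbb{C}$. First I would twist the sequence $0 \to \W^{p}\T_X \to \W^{p}\T_{\PP^{n+1}}|_X \to \W^{p-1}\T_X(d)\to 0$ by $\of_X(m)$ and apply the duality isomorphisms $\W^i\T_X(m)\cong\Omega^{n-i}_X$ and $\W^i\T_{\PP^{n+1}}|_X(m)\cong\Omega^{n+1-i}_{\PP^{n+1}}|_X(d)$. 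Since $n=2p$ forces $n-p=p$ and $n-p+1=p+1$, this yields
\[ 0 \to \Omega^{p}_X \to \Omega^{p+1}_{\PP^{n+1}}(d)|_X \to \Omega^{p+1}_X(d) \to 0. \]
Passing to cohomology and using that $H^{p}(X,\Omega^{p+1}_X(d))=0$ by Akizuki--Nakano (Kodaira) vanishing, I obtain the exact sequence
\[ \cdots \to H^{p,p-1}(X) \to H^{p-1}(X,\Omega^{p+1}_{\PP^{n+1}}(d)|_X) \to H^{p-1}(X,\Omega^{p+1}_X(d)) \to H^{p,p}(X) \to H^{p}(X,\Omega^{p+1}_{\PP^{n+1}}(d)|_X) \to 0, \]
where I have identified the term $H^p(X,\Omega^p_X)$ with $H^{p,p}(X)$.

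The heart of the proof is the evaluation of the two boundary restriction terms, which I would compute through the restriction sequence $0\to\Omega^{p+1}_{\PP^{n+1}}\to\Omega^{p+1}_{\PP^{n+1}}(d)\to\Omega^{p+1}_{\PP^{n+1}}(d)|_X\to 0$ on $\PP^{n+1}$. Bott vanishing kills both $H^{p}(\PP^{n+1},\Omega^{p+1}(d))$ and $H^{p+1}(\PP^{n+1},\Omega^{p+1}(d))$, so the associated long exact sequence produces
\[ H^{p}(X,\Omega^{p+1}_{\PP^{n+1}}(d)|_X)\cong H^{p+1}(\PP^{n+1},\Omega^{p+1}_{\PP^{n+1}})=H^{p+1,p+1}(\PP^{n+1})\cong\mathbb{C}. \]
For the lower term, Bott vanishing again gives $H^{p-1}(\PP^{n+1},\Omega^{p+1}(d))=0$, while the next group $H^{p}(\PP^{n+1},\Omega^{p+1}_{\PP^{n+1}})=H^{p+1,p}(\PP^{n+1})$ is an off-diagonal Hodge group of projective space and hence vanishes; squeezing between these two zeros forces $H^{p-1}(X,\Omega^{p+1}_{\PP^{n+1}}(d)|_X)=0$.

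Substituting these two computations into the long exact sequence collapses it to
\[ 0 \to H^{p-1}(X,\Omega^{p+1}_X(d)) \to H^{p,p}(X) \to \mathbb{C} \to 0, \]
and the final step is to rewrite the left-hand term via the duality isomorphism $\W^{p-1}\T_X(m)\cong\Omega^{n-p+1}_X=\Omega^{p+1}_X$, which gives $H^{p-1}(X,\Omega^{p+1}_X(d))\cong H^{p-1}(X,\W^{p-1}\T_X(m+d))$ and hence the asserted short exact sequence. Naturality is inherited from the naturality of all the sheaf sequences and duality pairings involved.

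The main obstacle I anticipate is the clean identification of the two restriction terms: precisely, making sure that the diagonal generator of $H^{p+1,p+1}(\PP^{n+1})$ is the one surviving to produce the surjection onto $\mathbb{C}$, while the off-diagonal group $H^{p+1,p}(\PP^{n+1})$ genuinely vanishes. This is exactly the numerical coincidence created by $n=2p$ that was absent in Lemma \ref{lemma.detailed}, where the surviving class instead yielded an isomorphism. A secondary point requiring care is the low-dimensional edge case $p=1$ (a surface in $\PP^3$), where $H^{p-1}(X,\Omega^{p+1}_{\PP^{n+1}}(d)|_X)=H^{0}(X,\Omega^{2}_{\PP^3}(d)|_X)$ need not vanish; there the argument must be supplemented, or the hypothesis restricted to $p\geq 2$ in line with the preceding lemmas of this section.
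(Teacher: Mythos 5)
Your proposal is correct and follows essentially the same route as the paper: twist the Koszul-type sequence $0\to\W^{p}\T_X\to\W^{p}\T_{\PP^{n+1}}|_X\to\W^{p-1}\T_X(d)\to 0$ by $\of_X(m)$, dualize, apply Akizuki--Nakano to truncate the long exact sequence, and evaluate the two restriction terms $H^{p-1}$ and $H^{p}$ of $\Omega^{p+1}_{\PP^{2p+1}}(d)|_X$ via the restriction sequence on $\PP^{2p+1}$ together with Bott vanishing, obtaining $0$ and $\mathbb{C}$ respectively. Your caveat about $p=1$ is well taken and in fact necessary --- for a quartic surface in $\PP^3$ the claimed sequence would read $0\to\C^{34}\to\C^{20}\to\C\to 0$, which is absurd, so the lemma does require $p\geq 2$ (as is implicit in its only application, Corollary \ref{p2}).
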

\begin{proof}
Reasoning as in the proof of \ref{prima} we get the long exact sequence\begin{small}
\[ \ldots\to H^{p-1}(X,\Omega^{p+1}_{\PP^{2p+1}}(d)|_X) \to H^{p-1}(X,\Omega_X^{p+1}(d)) 
\to H^{p,p}(X) \to H^{p}(X,\Omega^{p+1}_{\PP^{n+1}}(d)|_X) \to 0,\]\end{small}
and we have
\[
H^{p-1}(\PP^{2p+1},\Omega_{\PP^{2p+1}}^{p+1}(d)|_X) \cong H^{p}(\PP^{2p+1},\Omega_{\PP^{2p+1}}^{p+1})=0
\]
and
\[
H^{p}(\PP^{2p+1},\Omega_{\PP^{2p+1}}^{p+1}(d)|_X) \cong H^{p+1}(\PP^{2p+1},\Omega_{\PP^{2p+1}}^{p+1})\cong\mathbb{C}.
\]
\end{proof}
\begin{cor}\label{p2}
Assume $n=2p$, with $p\geq 2$. Then there exists an isomorphsim
\[
H^{1}(U_X,\T_{U_X})_{m+(p-1)d} \cong H^{p}(U_X, \W^{p}\T_{U_X})_{m}  
\]
\end{cor}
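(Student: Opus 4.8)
The plan is to show that both sides of the asserted isomorphism are isomorphic to the primitive group $H^{p,p}_{\prim}(X)$, mirroring the proof of Corollary \ref{pn2} for the case $n\neq 2p$; the only place where that argument breaks down is the central link provided there by Lemma \ref{lemma.detailed}, whose hypothesis $n\neq 2p$ now fails, and which I shall replace by the short exact sequence of Lemma \ref{sopra}.

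First I would identify the right-hand side. Since $n=2p$, Theorem \ref{Hoch decomposition} with $q=p$ gives
\[
H^{p}(U_X, \W^{p}\T_{U_X})_{m}=\HH^{p,p}(U_X)_m\cong H^{n-p+1,p}_{\prim}(X)\oplus H^{n-p,p}_{\prim}(X)=H^{p,p}_{\prim}(X),
\]
the summand $H^{n-p+1,p}_{\prim}(X)=H^{p+1,p}_{\prim}(X)$ vanishing because $(p+1)+p=n+1>n$.

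Next I would transport the left-hand side onto $X$. Chaining the first ``in particular'' isomorphism of Lemma \ref{cor.questo.qui} with the one of Lemma \ref{last} (both valid for $2\le p\le n$) yields
\[
H^{1}(U_X,\T_{U_X})_{m+(p-1)d}\cong H^{1}(X,\T_X(m+(p-1)d))\cong H^{p-1}(X,\W^{p-1}\T_X(m+d)),
\]
so the corollary is reduced to proving $H^{p-1}(X,\W^{p-1}\T_X(m+d))\cong H^{p,p}_{\prim}(X)$. Here Lemma \ref{sopra} intervenes: for $n=2p$ it exhibits $H^{p-1}(X,\W^{p-1}\T_X(m+d))$ as the kernel of a surjection $H^{p,p}(X)\to\mathbb{C}$, hence as a codimension-one subspace of the middle cohomology. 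To compare it with $H^{p,p}_{\prim}(X)$ I would use the Lefschetz structure of a hypersurface: by the Lefschetz hyperplane theorem $H^{i,i}(X)\cong H^{i,i}(\PP^{n+1})\cong\mathbb{C}$ for $0\le i<p$, so $H^{i,i}_{\prim}(X)=0$ for $1\le i\le p-1$ and the Lefschetz decomposition collapses to $H^{p,p}(X)=H^{p,p}_{\prim}(X)\oplus\mathbb{C}\,h^{p}$, where $h$ is the hyperplane class. Thus the non-primitive part is one-dimensional, $\dim H^{p-1}(X,\W^{p-1}\T_X(m+d))=\dim H^{p,p}(X)-1=\dim H^{p,p}_{\prim}(X)$, and the required isomorphism of finite-dimensional vector spaces follows at once, completing the chain with the right-hand side.

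The step I expect to demand the most care is upgrading this dimension count to a \emph{canonical} identification, matching the naturality recorded in the other corollaries. For that I would analyse the cokernel map $H^{p,p}(X)\to\mathbb{C}$ coming from Lemma \ref{sopra}: as in the $n=2p-1$ subcase inside the proof of Lemma \ref{lemma.detailed}, it should factor through the restriction $H^{p,p}(\PP^{n+1})\to H^{p,p}(X)$ together with cup product by the hyperplane class, and therefore annihilate every primitive class. Once this is checked, the subspace singled out by Lemma \ref{sopra} is exactly $H^{p,p}_{\prim}(X)$, which turns the two dimension-matching isomorphisms into a genuine chain of natural isomorphisms and settles the corollary.
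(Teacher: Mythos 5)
Your proposal is correct and follows essentially the same route as the paper: reduce both sides to codimension-one subspaces of $H^{p,p}(X)$ via Lemmas \ref{cor.questo.qui}, \ref{last}, \ref{sopra} and Theorem \ref{Hoch decomposition}, then compare using Hard Lefschetz and the Lefschetz hyperplane theorem. Your closing remarks on upgrading the dimension count to a canonical identification go beyond what the paper records (the corollary only asserts the existence of an isomorphism), but they are a sound observation rather than a divergence in method.
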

\begin{proof}
By Lemmas \ref{cor.questo.qui} and \ref{last} we have an isomorphism 
\[H^{1}(U_X,\T_{U_X})_{m+(p-1)d} \cong\break  H^{p-1}(U_X, \W^{p-1}\T_{U_X})_{m+d},\]
so we need only to exhibit an isomorphism \[H^{p-1}(U_X, \W^{p-1}\T_{U_X})_{m+d}\cong H^{p}(U_X, \W^{p}\T_{U_X})_{m}.\] To do this, recall the isomorphism $H^{p}(U_X, \W^{p}\T_{U_X})_{m}\cong H^{p,p}_{\prim}(X)$ from \ref{Hoch decomposition}, the short exact sequence
\[
0\to H^{p,p}_{\prim}(X)\to H^{p,p}(X)\xrightarrow{\lambda}  H^{p+1,p+1}(X)\to 0
\]
coming from Hard Lefschetz, and the fact that $H^{p+1,p+1}(X)\cong\mathbb{C}$ from the Lefschetz hyperplane theorem combined with Hard Lefschetz. Therefore we have a natural short exact sequence
\[
0\to H^{p}(U_X, \W^{p}\T_{U_X})_{m}\to H^{p,p}(X)\to  \mathbb{C}\to 0,
\]
and we use Lemma \ref{sopra} and Lemma \ref{cor.questo.qui} to conclude.
\end{proof}
Putting all the above results toghether we obtain the following
\begin{thm}
Let $X\subseteq \PP^{n+1}$ be a smooth degree $d$ projective hypersurface, with $\dim X = n\geq 3$. Then we have
\[
H^{1}(U_X,\T_{U_X})_{pd-n-2} \cong H^{p}(U_X, \W^{p}\T_{U_X})_{d-n-2}  
\]
for every $0\leq p\leq \dim X$.
\end{thm}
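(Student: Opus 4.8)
The plan is to reduce the theorem to the corollaries already established, after normalizing the degrees. Writing $m=d-n-2$, one has $pd-n-2=m+(p-1)d$ and $d-n-2=m$, so the claim is the family of isomorphisms $H^1(U_X,\T_{U_X})_{m+(p-1)d}\cong H^p(U_X,\W^p\T_{U_X})_m$ for $0\le p\le n$. For $2\le p\le n$ there is nothing new to prove: when $n\neq 2p$ this is exactly Corollary \ref{pn2}, and when $n=2p$ (which forces $p\ge 2$) it is Corollary \ref{p2}. Hence the only content beyond quoting these two corollaries lies in the two boundary values $p=1$ and $p=0$.

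The case $p=1$ is a tautology: the exponent $m+(p-1)d$ collapses to $m$ and $\W^1\T_{U_X}=\T_{U_X}$, so the two sides are literally the same graded piece $H^1(U_X,\T_{U_X})_m$. The genuinely separate case is $p=0$, which lies outside the range of Corollaries \ref{pn2} and \ref{p2} and so requires its own argument; I expect this to be the main obstacle. Here the right-hand side is $H^0(U_X,\of_{U_X})_m=H^0(X,\of_X(m))=H^{n,0}(X)$, while the left-hand side is the a priori unrelated group $H^1(U_X,\T_{U_X})_{m-d}$.

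To settle $p=0$ I would first transport the computation back to $X$. Since $\dim X=n\ge 3$ the hypersurface is arithmetically Cohen--Macaulay, so $H^1(X,\of_X(k))=H^2(X,\of_X(k))=0$ for all $k$; feeding this into Lemma \ref{lemma.first} at degree $k=m-d$ gives $H^1(U_X,\T_{U_X})_{m-d}\cong H^1(X,\T_X(m-d))$, and the duality isomorphism $\T_X(m)\cong\Omega^{n-1}_X$ rewrites it as $H^1(X,\Omega^{n-1}_X(-d))$. I would then compare this with $H^0(X,\Omega^n_X)=H^{n,0}(X)$ through the restriction sequence $0\to\Omega^{n-1}_X(-d)\to\Omega^n_{\PP^{n+1}}|_X\to\Omega^n_X\to 0$, exactly in the spirit of Lemma \ref{lemma.detailed}. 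The key point to verify is the vanishing of both $H^0(X,\Omega^n_{\PP^{n+1}}|_X)$ and $H^1(X,\Omega^n_{\PP^{n+1}}|_X)$: using $\Omega^n_{\PP^{n+1}}\cong\T_{\PP^{n+1}}(-n-2)$ and the restricted Euler sequence $0\to\of_X(-n-2)\to\of_X(-n-1)^{\oplus(n+2)}\to\T_{\PP^{n+1}}|_X(-n-2)\to 0$, these two groups are squeezed between cohomology of negative twists of $\of_X$ and the vanishing Cohen--Macaulay groups, hence are zero. The long exact sequence of the restriction sequence then collapses to $H^0(X,\Omega^n_X)\cong H^1(X,\Omega^{n-1}_X(-d))$, which is precisely the desired identification of the two sides for $p=0$.

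Assembling the three cases completes the proof. The only substantive work is the $p=0$ computation; the case $p=1$ is a tautology and the range $2\le p\le n$ is a direct quotation of Corollaries \ref{pn2} and \ref{p2}. A minor point I would double-check is that the Cohen--Macaulay vanishings used for $p=0$ (in particular the one for $H^2$) genuinely require $n\ge 3$, which is exactly the standing hypothesis of the theorem.
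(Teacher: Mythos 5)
Your proposal is correct and follows essentially the same route as the paper's proof: quote Corollaries \ref{pn2} and \ref{p2} for $2\le p\le n$, observe that $p=1$ is a tautology, and for $p=0$ reduce $H^{1}(U_X,\T_{U_X})_{-n-2}$ to $H^{1}(X,\T_X(-n-2))$ using the arithmetic Cohen--Macaulay vanishings in Lemma \ref{lemma.first}, then identify this with $H^0(X,\omega_X)$ via the twisted normal/conormal sequence. The only cosmetic deviation is that you derive the auxiliary vanishing of $H^0$ and $H^1$ of $\Omega^n_{\PP^{n+1}}|_X$ from the restricted Euler sequence and Cohen--Macaulayness rather than from the restriction sequence on $\PP^{n+1}$ and Bott vanishing; both arguments are valid.
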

\begin{proof}
Since $d-n-2$ is precisely the integer $m$ such that $\omega_X\cong \of_X(m)$ by adjunction, for $p\geq 2$ the result follows from Corollary \ref{p2} and Corollary \ref{pn2}. For $p=1$ there is nothing to prove. Finally, for $p=0$ we have to show that
\[
H^{1}(U_X,\T_{U_X})_{-n-2} \cong H^{0}(U_X, \of_{U_X})_{d-n-2}  
\]
On the right hand side we have $H^0(X,\of_X(d-n-2))$, while on the left hand side
we consider the short exact sequence
\[
0\to \mathrm{coker}\{ \lambda_{n-1,0}(-d)\} \rightarrow H^{1}({U_X}, \T_{U_X})_{-n-2} {\rightarrow}\ker\{\lambda_{n-1,1}(-d)\}\rightarrow 0
\]
where $\lambda_{i,j}(-d)$ is the Lefschetz morphism \[H^j(X,\Omega_X^i(-d))\to H^{j+1}(X,\Omega_X^{i+1}(-d)).\] Now we have \[H^{1}(X,\Omega_X^{n}(-d))=
H^1(X,\of(-n-2))=0\] and \[H^{2}(X,\Omega_X^{n}(-d))=H^{2}(X,\of_X(-n-2))=0,\] since $X$ is arithmetically Cohen-Macaulay.  So the above short exact sequence gives
\[
 H^{1}({U_X}, \T_{U_X})_{-n-2} \cong H^{1}(X,\T_X(-n-2))
\]
and to conclude the proof of the theorem we only need to show that 
\[
H^{1}(X,\T_X(-n-2))\cong H^0(X,\of_X(d-n-2)).
\]
This follows from the normal sheaf exact sequence as in the proof of Lemma \ref{last}. 
\end{proof}

Now, why is this interesting? Thanks to the computation above, we have \[H^{p, n-p}_{\prim}(X) \cong H^p({U_X}, \W^p \T_{U_X})_{m} \cong H^1({U_X}, \T_{U_X})_{pd-n-2}.\] On the other hand we know that for a degree $d$ smooth projective hypersurface defined by the polynomial $f$ we have 
\[
H^1(U_X, \T_{U_X}) \cong T^1_{A_X}\cong \mathcal{M}_f[d].
\]
Therefore, we recover the following result from Griffiths' residue theory \cite{griffiths}.
\begin{cor}
Let $X\subseteq \PP^{n+1}$ be a degree $d$ smooth projective hypersurface  with $\dim X \geq 3$, defined by the polynomial $f$.
Then we have 
\[
(\mathcal{M}_f[d])_{pd-n-2}\cong   H^{p, n-p}_{\prim}(X)
\]
for every $0\leq p\leq \dim X$. Equivalently, one has
\[
(\mathcal{M}_f)_{pd-n-2}\cong   H^{p-1, n-p+1}_{\prim}(X)
\]
for every $1\leq p\leq \dim X+1$.
\end{cor}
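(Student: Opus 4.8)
The plan is to assemble the corollary from graded isomorphisms that are already in place, reading off the statement one homogeneous piece at a time. I would set $m=d-n-2$, which by adjunction is exactly the integer with $\omega_X\cong\of_X(m)$, and first record the two ends of the chain separately. On the Hodge side, the preceding Theorem provides, for each $0\leq p\leq n$, a natural isomorphism $H^1(U_X,\T_{U_X})_{pd-n-2}\cong H^p(U_X,\W^p\T_{U_X})_m$, while specialising Theorem \ref{Hoch decomposition} to the diagonal case $p=q$ gives $H^p(U_X,\W^p\T_{U_X})_m\cong H^{n-p,p}_{\prim}(X)$ (this is also the content of Lemma \ref{prima}). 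Composing and using Hodge symmetry $H^{n-p,p}_{\prim}(X)\cong H^{p,n-p}_{\prim}(X)$ yields
\[
H^1(U_X,\T_{U_X})_{pd-n-2}\cong H^{p,n-p}_{\prim}(X).
\]

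On the deformation side, I would invoke the hypersurface identifications from Section \ref{sec.Ax}. Since $X$ is a smooth hypersurface it is a complete intersection, hence arithmetically Cohen-Macaulay, so for $\dim X\geq 2$ we have $H^1(X,\of_X(k))=0$ for every $k$; by (the proof of) Theorem \ref{t2} this gives $T^1_{A_X}\cong H^1(U_X,\T_{U_X})$, and the classical identification of the first-order deformations of the cone over a hypersurface with its Milnor algebra gives $T^1_{A_X}\cong\mathcal{M}_f[d]$, both as graded modules. Taking the degree $pd-n-2$ component of $\mathcal{M}_f[d]\cong H^1(U_X,\T_{U_X})$ and combining with the previous display produces $(\mathcal{M}_f[d])_{pd-n-2}\cong H^{p,n-p}_{\prim}(X)$, which is the first assertion. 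The equivalent reformulation then follows by the substitution $p\mapsto p-1$, which shifts the range to $1\leq p\leq\dim X+1$, together with the grading convention $(M[d])_k=M_{k+d}$, which turns $(\mathcal{M}_f[d])_{(p-1)d-n-2}$ into $(\mathcal{M}_f)_{pd-n-2}$.

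Each link in the chain is already established, so the work is essentially bookkeeping, and the main obstacle is precisely that bookkeeping: verifying that every isomorphism in the chain is one of \emph{graded} modules, so that passing to a fixed homogeneous component is legitimate, and that the shift conventions for $[d]$ and for the Hochschild weight $m$ are applied consistently throughout. The only input that is not purely formal is the Hodge symmetry $h^{p,n-p}_{\prim}=h^{n-p,p}_{\prim}$, needed to reconcile the $(n-p,p)$-indexing coming out of Theorem \ref{Hoch decomposition} with the $(p,n-p)$-indexing of the statement; this is immediate from complex conjugation on the primitive Hodge decomposition, which respects the primitive sub-Hodge structure.
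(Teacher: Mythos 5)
Your proposal is correct and follows essentially the same route as the paper: the paper likewise chains $H^{p,n-p}_{\prim}(X)\cong H^p(U_X,\W^p\T_{U_X})_m\cong H^1(U_X,\T_{U_X})_{pd-n-2}$ (from Theorem \ref{Hoch decomposition} and the preceding theorem) with $H^1(U_X,\T_{U_X})\cong T^1_{A_X}\cong\mathcal{M}_f[d]$ and then reindexes. Your explicit appeal to Hodge symmetry to pass from the $H^{n-p,p}_{\prim}$ output of Theorem \ref{Hoch decomposition} to the $H^{p,n-p}_{\prim}$ of the statement is a detail the paper silently elides, and your bookkeeping of the shift $[d]$ matches the paper's convention $T^1_{A_X}[-d]\cong\mathcal{M}_f$.
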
 

\pagestyle{plain}
\bibliographystyle{abbrv}
\nocite{*}

%\bibliography{biblio}
\end{document}